\newcommand{\beq}{\begin{eqnarray}}
\newcommand{\eeq}{\end{eqnarray}}
\newcommand{\bb}{\mathbb}
\newcommand{\R}{\bb R}
\newcommand{\Z}{\bb Z}
\newcommand{\conv}{\operatorname{conv}}
\newcommand{\aff}{\operatorname{aff}}
\renewcommand{\epsilon}{\varepsilon}
\newtheorem{prop}{Proposition}
\newtheorem{definition}[prop]{Definition}
\newtheorem{remark}[prop]{Remark}
\newtheorem{lemma}[prop]{Lemma}
\newtheorem{cor}[prop]{Corollary}
\newtheorem{theorem}[prop]{Theorem}
\newtheorem{observation}[prop]{Observation}
\newtheorem{assumption}[prop]{Assumption}
\newtheorem*{claim-no-number}{\sc Claim}
\Crefname{ALC@unique}{Step}{Steps}
\newenvironment{cpf}{\begin{trivlist} \item[] {\em Proof of claim.}}{\hspace*{\stretch{1}} $\diamond$ \end{trivlist}}
\newcommand{\intr}{\operatorname{intr}}
\title{Split cuts in the plane}
\author{Amitabh Basu\thanks{Department of Applied Mathematics and Statistics, Johns Hopkins University, Baltimore, MD, USA ({\tt basu.amitabh@jhu.edu}, {\tt hjiang32@jhu.edu}). Supported by the ONR Grant N000141812096, NSF Grant CCF2006587 and the AFOSR Grant FA95502010341.} 
\and Michele Conforti\thanks{Dipartimento di Matematica ``Tullio Levi-Civita'', Universit\`a degli Studi Padova, Italy ({\tt conforti@math.unipd.it}, {\tt disumma@math.unipd.it}). Supported by the PRIN grant 2015B5F27W and by a SID grant of the University of Padova.}
\and Marco Di Summa\footnotemark[3]
\and Hongyi Jiang\footnotemark[1]}
\begin{document}

\maketitle

\begin{abstract}
We provide a polynomial time cutting plane algorithm based on split cuts to solve integer programs in the plane. We also prove that the split closure of a polyhedron in the plane has polynomial size.
\end{abstract}

  \section{Introduction}

In this paper, we work in $\R^2$ and we always implicitly assume that all polyhedra, cones, half-planes, lines are rational. Given a polyhedron $P\subseteq\R^2$, we let $P_I:=\conv(P\cap \Z^2)$, where $\conv(\cdot)$ denotes the convex hull operator.

Given $\pi\in \Z^2\setminus\{0\}$ and $\pi_0\in \Z$,  let $H_0$ and $H_1$ be the half-planes defined by $\pi x\le \pi_0$ and $\pi x\ge \pi_0+1$, respectively. Given a polyhedron $P\subseteq\R^2$, we let $P_0:=P\cap H_0$, $P_1:=P\cap H_1$ and $P^{\pi,\pi_0}:=\conv(P_0\cup P_1)$. $P^{\pi,\pi_0}$ is a polyhedron that contains $P\cap \Z^2$. An inequality  $cx\le d$ is a {\em split inequality} (or {\em split cut}) for $P$ if there exist $\pi\in \Z^2\setminus\{0\}$ and $\pi_0\in \Z$ such that the inequality $cx\le d$ is valid for $P^{\pi,\pi_0}$. The vector $(\pi,\pi_0)$, or the set $H_0\cup H_1$, is a {\em split disjunction}, and we say that $cx\le d$ is a split inequality for $P$ with respect to $(\pi,\pi_0)$. The closed complement of a split disjunction, i.e., the set defined by $\pi_0\le\pi x\le\pi_0+1$, is called a {\em split set}.
If one of $P_0$, $P_1$ is empty, say  $P_1=\emptyset$,  the split inequality $\pi x\le \pi_0$  is called a {\em  Chv\'{a}tal inequality}.

 A Chv\'{a}tal inequality   $\pi x\le \pi_0$  where $\pi$ is a primitive vector (i.e., its coefficients are relatively prime) has the following geometric interpretation:  Let $z:=\max_{x\in P} \pi x$ and let $H$ be the half-plane defined by $\pi x\le z$. Then $P\subseteq H$ and $\pi_0=\lfloor z\rfloor$, because $\pi_0 \in \Z$ and $P_1=\emptyset$.  Since $\pi$ is a primitive vector,  $H_I$ is defined by the inequality  $\pi x\le \pi_0$. We will say that the inequality  $\pi x\le \pi_0$ defines the {\em Chv\'{a}tal strengthening} of the half-plane $H$. If $z=\pi_0$, then we say $H$ is {\em Chv\'{a}tal strengthened}.
  \medskip

An {\em inequality  description} of a polyhedron  $P\subseteq\R^2$ is   a system $Ax\le b$ such that $P=\{x\in \R^2:\;Ax\le b\}$, where $A\in \Z^{m\times 2}$ and $b\in \Z^m$ for some positive integer $m$. The {\em size} of the description of $P$, i.e., the number of bits needed to encode the linear system, is $O(m \log \|A\|_\infty+ m \log \|b\|_\infty)$. (Notation $\|\cdot\|_\infty$ indicates the infinity-norm of a vector or a matrix, i.e., the maximum absolute value of its entries.)
It follows from the above argument that when the coefficients in each row of $A$ are relatively prime integers, the inequalities defining $P$ are Chv\'{a}tal strengthened.
\medskip

Given  a polyhedron $P$, a  {\em cutting plane} or {\em cut}  is an inequality that defines a half-plane $H$ such  that $P\not\subseteq H$ but $P_I\subseteq H$. A  {\em cutting plane algorithm} is a procedure that, given a polyhedron $P\subseteq\R^2$ and a vector $c\in \Z^2$, solves the integer program $\max \{cx:x\in P\cap \Z^2\}$ by adding at each iteration a cut that eliminates an optimal vertex of the current continuous relaxation until integrality is achieved or infeasibility is proven.
\medskip

Integer programming in the plane is the problem $\max\{cx : Ax \leq b,\; x\in \Z^2\}$ where $c \in \Z^2, A \in \Z^{m\times 2}$ and $b \in \Z^m$. In \cref{sec:algorithm} we provide a cutting plane algorithm for this problem that uses split inequalities as cutting planes and such that the number of iterations (i.e., cutting planes computed) is $O(m (\log \|A\|_\infty)^2)$. (The derivation of every cutting plane can be carried out in polynomial time but involves a constant number of $\gcd$ computations.)
\medskip

We note that integer programming in the plane is well-studied and understood. In particular, given a polyhedron $P\subseteq\R^2$, Harvey \cite{harvey1999computing} gave an efficient procedure to produce an inequality description of $P_I$. Eisenbrand and Laue~\cite{Eisenbrand2d} gave an algorithm to solve the problem that makes $O(m + \max\{\log \|A\|_\infty, \log \|b\|_\infty, \log \|c\|_\infty\})$ arithmetic operations.

As split cuts are widely used in integer programming solvers, the scope of the present research is to prove that this class of integer programs can also be solved in polynomial time with a cutting plane algorithm based on split cuts (albeit not as efficiently as in \cite{Eisenbrand2d}).
\medskip

The second part of this paper deals with the complexity of the split closure of a polyhedron in the plane.
Given a polyhedron $P\subseteq\R^2$, the {\em split closure $P^{\rm split}$} of $P$ is defined as follows:
$$P^{\rm split}:=\bigcap_{\pi \in \Z^2\setminus\{0\},\pi_0\in \Z}P^{\pi,\pi_0}.$$
Cook, Kannan and Schrijver~\cite{cook-kannan-schrijver} proved that $P^{\rm split}$ is a polyhedron. Polyhedrality results for cutting plane closures, such as the above split closure result, have a long history in discrete optimization starting from the classical result that the {\em Chv\'atal closure} of a rational polytope (the intersection of all Chv\'atal inequalities) is polyhedral (see, e.g., Theorem 23.1 in~\cite{sch}), with several more recent results~\cite{MR2969261,dadush2011split,dadush2014chvatal,bhk2,zhu2020,pashkovich2019}, to sample a few. The {\em complexity} of cutting plane closures, i.e., the number of facets and the bit complexity of the facets, is relatively less understood. One of the most well-known results in this direction is due to  Bockmayr and Eisenbrand~\cite{bockmayrEisenbrand2001}, who showed that the complexity of the Chv\'atal closure of a rational polytope is polynomial in the description size of the polytope, if the dimension is a fixed constant (see \cref{thm:CG-closure} in \cref{sec:closure} below). It has long remained an open question whether the split closure is of polynomial complexity as well, even in the case of two dimensions. We settle this question in the affirmative in this paper; see \cref{thm:split-closure} in \cref{sec:closure}.

Finally, as again shown in~\cite{cook-kannan-schrijver}, if one defines $P^0:=P$ and recursively $P^i:=(P^{i-1})^{\rm split}$, then $P^t=P_I$ for some $t$. The {\em split rank} of $P$ is the smallest $t$ for which this occurs. It is a folklore result that if $P\subseteq \R^2$ is a polyhedron, its split rank is at most 2; we will observe in \cref{rem:int-hull} that this also follows from the arguments used in this paper. 

\paragraph{Some notation} Let $\dim(Q)$ denote the dimension of any polyhedron $Q$. A  polyhedron in $\R^2$ which is the intersection of two non parallel half-planes is a {\em full-dimensional translated pointed cone}. However, to simplify terminology we will often refer to such a polyhedron as a {\em translated cone}. Its unique vertex is  the {\em apex} of the cone. We will use the notation $(H_1,H_2):= H_1 \cap H_2$ to denote the translated cone formed by the intersection of two half-planes $H_1, H_2$. Given a half-plane $H$, we let $H^=$ denote its boundary.

\section{Tilt cuts and the clockwise algorithm}\label{sec:algorithm}

To simplify the presentation, throughout the paper the notions of facet and facet defining inequality of a polyhedron will be interchangeably used.

\begin{definition}(Tilt of a facet about a pivot with respect to a translated cone)\label{def:tilt}
Let $C = (H_1, H_2)$ be a translated cone with apex not in $\Z^2$, and assume that $H_1$ is Chv\'{a}tal strengthened. 

 Let $\widehat H$ be the line in $H_1$ parallel to $H_1^=$  and closest to $H_1^=$ such that $\widehat H\cap \Z^2\ne \emptyset$. Let $ p\in H_1^=\cap C\cap \Z^2$ and $q \in (H_1^=\setminus C)\cap \Z^2$ be the unique  points  such that the open line segment $(p,q)$ contains no integer point.  Let $x \in \widehat H\cap C\cap \Z^2$ and $y \in (\widehat H\setminus C)\cap \Z^2$ be the unique  points  such that the open line segment $(x,y)$ contains no integer point (possibly $x\in H_2^=$).

Two parallel sides of the parallelogram $P:=\conv(p,q,y,x)$ are contained in $H_1^=\cup\widehat H$. The other two sides of $P$ define a split disjunction in the following way. Let $W_0$, $W_1$ be the half-planes such that $W_0^=$ is the line containing $p$ and $x$, $W_1^=$ is the line containing $q$ and $y$, and $W_0\cap W_1=\emptyset$. As $P$ has integer vertices but contains no other integer point, $P$ has area 1 and $W_0$, $W_1$ define a split disjunction $(\pi,\pi_0)$.   

Let $F_1$ be the facet of $C$ induced by $H_1$. We now define the {\em tilt} $T$  of $F_1$ with {\em pivot $p$ with respect to $C$}. If $C\cap W_1=\emptyset$, then $(\pi,\pi_0)$ defines a Chv\'{a}tal cut for $C$ (as in \cref{fig:def1}(i)), and we let $T$ be this Chv\'{a}tal cut. Otherwise let $ x'\in W_1^=\cap C\cap \Z^2$ and $y' \in (W_1^=\setminus C)\cap \Z^2$ be the unique points such that the open line segment $(x',y')$ contains no integer point and let $q'$ be the point of intersection of $[x',y']$ and $H_2^=$ (possibly $q'=x'$), see \cref{fig:def1}(ii). We define $T$ as the split cut  for $C$ with respect to $(\pi,\pi_0)$ such that $T^=$ contains $p$ and $y'$.
(Note that in this case  $T$ is not the ``best'' split cut for $C$ with respect to $(\pi,\pi_0)$, as  it does  not define a facet of $\conv ((C \cap W_0)\cup (C \cap W_1))$.) 
\end{definition}

\begin{figure}
\includegraphics[width=0.75\textwidth]{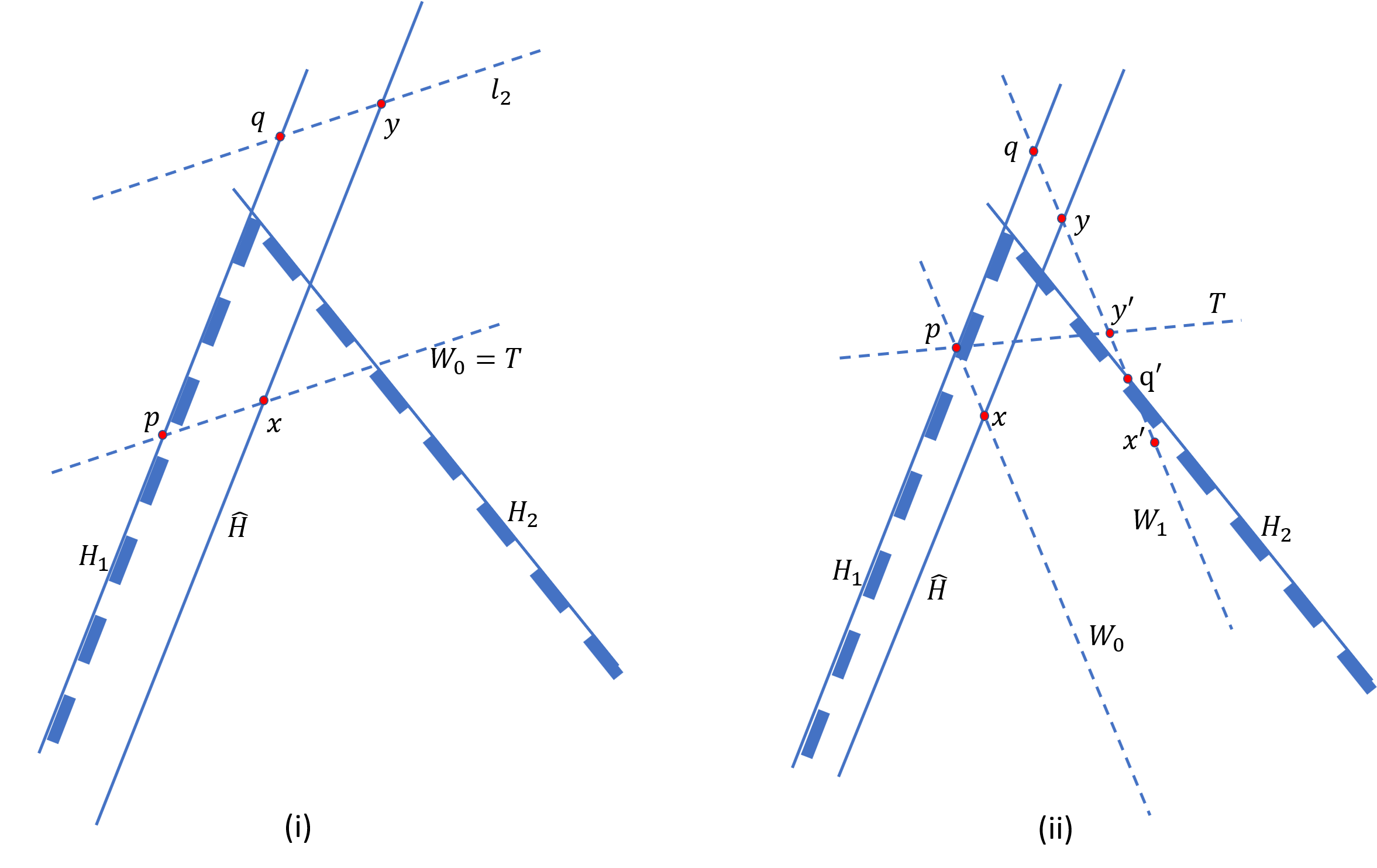}\centering
\caption{Illustration of \cref{def:tilt}.}
\label{fig:def1}
\end{figure}

In the next two lemmas we refer to the notation introduced in \cref{def:tilt}.

\begin{lemma}\label{lem:poly-q}
Let $ax \leq \beta$ and $dx \leq \delta$ be inequality descriptions of $H_1$ and $H_2$ respectively, where the coefficients of $a$ are relatively prime.
Then $dq - \delta \leq |a_1d_2 - a_2d_1|$.
\end{lemma}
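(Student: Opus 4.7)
The plan is to exploit that $p,q$ are consecutive integer points on the line $H_1^=$, together with the primitivity of $a$, to pin down $q-p$ exactly, and then estimate $dq-\delta$ by expanding around $p$.

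First I would note that since $p,q\in H_1^=$, both satisfy $ap=aq=\beta$, hence $a(q-p)=0$, so $q-p$ is orthogonal to $a=(a_1,a_2)$. In $\R^2$ the integer vectors orthogonal to $a$ are exactly the integer multiples of $(-a_2,a_1)/\gcd(a_1,a_2)$; but $a$ is primitive, so they are the integer multiples of $(-a_2,a_1)$. Since $p,q\in\Z^2$ and the open segment $(p,q)$ contains no integer point, this forces
\[
q-p = \epsilon\,(-a_2,a_1)\quad\text{for some }\epsilon\in\{-1,+1\}.
\]

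Next I would use that $p\in C\subseteq H_2$, so $dp\le\delta$, i.e.\ $dp-\delta\le 0$. Writing $q=p+(q-p)$ and plugging in,
\[
dq-\delta \;=\; (dp-\delta) + d(q-p) \;\le\; d(q-p) \;=\; \epsilon\,(-a_2 d_1 + a_1 d_2) \;=\; \pm(a_1 d_2 - a_2 d_1).
\]
Taking absolute values on the right gives $dq-\delta\le |a_1 d_2-a_2 d_1|$, which is the claim.

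There isn't really a hard step here; the only thing to be careful about is the justification that the primitivity of $a$, together with the emptiness of integer points in $(p,q)$, determines $q-p$ up to sign. Everything else is a one-line computation using $ap=\beta$ and $dp\le\delta$.
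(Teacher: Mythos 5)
Your proof is correct and follows essentially the same route as the paper: pin down $q-p=\pm(-a_2,a_1)$ using primitivity of $a$ and the fact that $(p,q)$ contains no integer point, then use $dp\le\delta$ (since $p\in C\subseteq H_2$) to bound $dq-\delta$ by $|a_1d_2-a_2d_1|$. Your write-up just makes the primitivity/consecutiveness justification slightly more explicit than the paper does.
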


\begin{proof}
Let $p$ be as in \cref{def:tilt}.
As $a_1, a_2$ are relatively prime, either $q = p + \begin{pmatrix} -a_2 \\ a_1\end{pmatrix}$ or $q = p + \begin{pmatrix} a_2 \\ -a_1\end{pmatrix}$. Taking an inner product with $d$, we obtain that $dq - dp \leq -a_2d_1 + a_1d_2$ in the first case and $dq - dp \leq a_2d_1 - a_1d_2$ in the second case.  
Since $p \in H_2$, $dp \leq \delta$. The result follows.\end{proof}

\begin{lemma}\label{le:tilt}  
Let $dx\le \delta$ be an inequality description of $H_2$.
\begin{enumerate}[(i)]
\item $T$  is  always Chv\'{a}tal strengthened. 
\item $T$ defines a facet of $C_I$ if and only if  $T$ is a Chv\'{a}tal cut for $C$.  
\item When $C\cap W_1\ne\emptyset$, we have $0<dy'-\delta\le \frac{dq-\delta}{2}$.
\end{enumerate}
\end{lemma}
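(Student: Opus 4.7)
The plan is to exploit the unimodular structure of the parallelogram $P = \conv(p, q, y, x)$ throughout. Setting $e := q - p$ and $f := x - p$, the hypothesis that $P$ has integer vertices, area $1$, and no other integer point forces $\det(e, f) = \pm 1$, so $(e, f)$ is a lattice basis of $\Z^2$. Moreover, since $H_1$ is Chv\'{a}tal strengthened and $\widehat H$ is the integer line inside $H_1$ adjacent to $H_1^=$, we have $ap = aq = \beta$ and $af = -1$, where $ax \le \beta$ is the primitive description of $H_1$.

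For part (i), the Chv\'{a}tal case is immediate: the split disjunction $(\pi, \pi_0)$ coming from the unimodular parallelogram $P$ has primitive $\pi$, and the boundary $T^=$ already contains $p$ and $x$. In the non-Chv\'{a}tal case, $T^=$ passes through the integer points $p$ and $y'$; since $q, y' \in W_1^=$, we can write $y' = q + kf$ for some $k \in \Z$, so $y' - p = e + kf$ has coordinates $(1, k)$ in the basis $(e, f)$. This vector is therefore primitive, hence so is the normal to $T^=$, and combined with $p, y' \in T^=$ this makes $T$ Chv\'{a}tal strengthened.

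Part (ii) uses the same data. When $T$ is a Chv\'{a}tal cut, $p, x \in C \cap \Z^2 \subseteq C_I$ both lie on $T^=$, so $T^= \cap C_I$ is an edge and $T$ is facet-defining. For the converse, assume $T$ is a non-Chv\'{a}tal tilt; by (i) the integer points on $T^=$ are $p + m(y' - p)$ for $m \in \Z$. Using $ap = \beta$, $af = -1$, $dq > \delta$, and $dy' > \delta$, one computes $a(y' - p) < 0$ and $d(y' - p) > 0$, which forces $p + m(y' - p) \notin H_1$ for $m < 0$ and $p + m(y' - p) \notin H_2$ for $m \ge 1$. Hence $p$ is the only integer point of $C$ on $T^=$, so $T^= \cap C_I = \{p\}$ is a vertex and $T$ is not facet-defining.

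Part (iii) is the only part requiring nontrivial computation. The strict inequality $dy' - \delta > 0$ is immediate from $y' \in H_1 \setminus H_2$. For the upper bound, I parameterize $W_1^= = \{q + tf : t \in \R\}$; because $af = -1$, the integer points of $W_1^= \cap H_1$ are exactly $q + kf$ for $k \ge 0$, and imposing $dq + k\cdot df \le \delta$ forces $df < 0$ and $k \ge s := (dq - \delta)/|df| > 0$. The key geometric input is that $q + f = y \notin C$, which rules out $k = 1$ and therefore gives $k' := \ceil{s} \ge 2$, with $y' = q + (k' - 1)f$. A direct substitution reduces the target $dy' - \delta \le (dq - \delta)/2$ to $k' \ge s/2 + 1$. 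This last inequality, the one real subtlety, holds both in the case $s \in \Z$ (where $k' = s \ge 2$) and in the case $s \notin \Z$ (where $k' = \floor{s} + 1 \ge 2$ gives $\floor{s} \ge 1$, and hence $k' \ge s/2 + 1$).
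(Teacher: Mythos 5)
Your proof is correct and takes essentially the same route as the paper's: (i) rests on $T^=$ containing an integer point, (ii) on whether $T^=\cap C$ contains a second integer point of $C$ besides $p$, and (iii) on the fact that $y=q+f\notin C$ places $y'$ at least one unit step from $q$ while $q'$ lies at most one unit step beyond $y'$, which is exactly the paper's argument that the length of $[y',q']$ is at most half the length of $[q,q']$, rendered algebraically. The only difference is presentational: your lattice-basis computations spell out steps the paper treats as immediate (e.g., ruling out the points $p+m(y'-p)$ with $m<0$ or $m\ge1$ in (ii)).
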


\begin{proof}
(i) As  $p\in T^=\cap\Z^2$ and $T$ is a rational half-plane,  $T$ is  always Chv\'{a}tal strengthened.

(ii) Recall that $T$ is a Chv\'{a}tal cut for $C$ if and only if $C\cap W_1=\emptyset$. When $C\cap W_1=\emptyset$ we have $x\in T^=\cap C\cap \Z^2$, whereas when  $C\cap W_1\ne\emptyset$ we have $y'\in (T^=\setminus  C)\cap \Z^2$ and there is no integer point in the open segment $(p,y')$.
Since $T$ is a facet of $C_I$ if and only if $T^=\cap C$ contains an integer point different from $p$,  this happens if and only if $T$ is a Chv\'{a}tal cut.

(iii) When  $C\cap W_1\ne\emptyset$, we have that  $q,y,y'\in (W_1^=\cap \Z^2)\setminus  C$, $x' \in (W_1^=\cap \Z^2) \cap C$, while $q'\in W_1^=\cap  H_2^=$ and $(x',y')$ has no integer points. Therefore the length of $[y',q']$ is at most half the length of $[q,q']$. This implies that  $0<dy'-\delta\le \frac{dq-\delta}{2}$.
\end{proof}

 \begin{remark} The algorithm below uses the following fact:
If $P\subseteq \R^2$ is not full-dimensional, the integer program $\max \{cx: x \in P\cap \Z^2\}$ can be solved by applying one Chv\'{a}tal cut. Specifically, if $\dim(P) \leq 0$, the problem is trivial, and if $\dim(P) = 1$, with one cut we can certify infeasibility if $\aff(P)\cap \Z^2=\emptyset$ (where $\aff(P)$ denotes the affine hull of $P$). The cut certifying infeasibility has boundary parallel to $P$ in this case. Otherwise, if $\aff(P)\cap \Z^2\ne\emptyset$, the problem is unbounded if and only if $\max \{cx: x \in P\}=\infty$.  Finally, if $\aff(P)\cap \Z^2\ne\emptyset$ and $\max \{cx: x \in P\}$ is finite, the integer program is either infeasible or admits a finite  optimum: this can be determined by applying one  Chv\'{a}tal cut whose boundary is orthogonal to $P$. We note that, from standard results in integer programming, the Chv\'{a}tal cuts considered above have polynomial encoding length.
 \end{remark}

\begin{definition}\label{def:early-late}(Late facet and early facet)
Given an irredundant description $Ax\le b$ of a full-dimensional pointed polyhedron $P$ with $m$ facets, we denote by $F_i$ the facet of $P$ defined by the  the $i$th inequality $a^ix\le b_i$ of the system $Ax\le b$. Given a vector $c\in \Z^2\setminus\{0\}$ such that the linear program $\max\{cx:x\in P\}$ has finite optimum, and a specified optimal vertex $v$,  we assume that $a^1,\dots, a^m$ are ordered clockwise  so  that $v\in F_m\cap F_1$ and $c$ belongs to the cone generated by $a^m$ and $a^1$. 
We call $F_m$ the {\em late facet} and $F_1$ the {\em early facet} of $P$ with respect to $v$. This ordered pair defines the translated cone $(F_m,F_1)$ with apex $v$.
\end{definition}

\begin{algorithm}
\caption{The ``clockwise'' cutting plane algorithm}
\label{alg:buildtree}
\begin{algorithmic}[1]
\REQUIRE{A pointed polyhedron $P \subseteq \R^2$  and a vector $c \in \Z^2\setminus\{0\}$ such that $\max \{cx: x \in P\}$ is finite.}
\ENSURE{An optimal solution of the integer program $\max \{cx: x\in P\cap\Z^2$\} or a certificate of infeasibility.}
\STATE{Initialize $Q := P$.}
\STATE\label{step:dim}{If $\dim(Q)\le 1$, apply at most one Chv\'{a}tal cut to output INFEASIBLE or an optimal solution.}
\STATE{Else solve the linear program $\max \{cv: v \in Q\}$ and let $v^*$ be the optimal vertex. If
$v^* \in \Z^2$, STOP and output $v^*$.}
\STATE\label{step:cut}{If $v^* \not\in \Z^2$, number the facets of $Q$ in clockwise order $F_1, \ldots, F_{m^Q}$ so that $v^*\in (F_{m^Q},F_{1})$. If $F_{m^Q}$ is not Chv\'{a}tal strengthened, let $T$ be its Chv\'{a}tal strengthening. Otherwise
    let $T$ be the tilt of $F_{m^Q}$ with respect to $(F_{m^Q},F_{1})$.
 Update $Q := Q \cap T$ and go to \cref{step:dim}.}
\end{algorithmic}
\end{algorithm}

In Algorithm~\ref{alg:buildtree}, if $Q$ is full-dimensional and has two vertices that maximize $cx$ over $Q$, then $\text{arg}\max_{x\in Q}cx$ is a bounded facet of $Q$  (``optimal''  facet). We assume that:

\begin{assumption}\label{ass:optimal-vertex}
The optimal vertex $v^*$ is the first vertex encountered when traversing the optimal  facet  in clockwise order. 
\end{assumption}

Therefore if two vertices maximize $cx$, the optimal  facet is $F_1$ in our numbering.  With this convention, inequality $T$ computed in \cref{step:cut} at a given iteration will be tight for the vertex that is optimal at the successive iteration. Note that if $\text{arg}\max_{x\in Q}cx$ is an unbounded facet of $Q$ and the unique vertex is the first point encountered on this facet when traversing it clockwise, this facet is $F_1$ in our numbering, and if the unique vertex is the last point encountered on this facet when traversing it clockwise, this facet is $F_m$ in our numbering. See Figure~\ref{fig:assumption}.

\begin{figure}[htbp]
\includegraphics[width=0.75\textwidth]{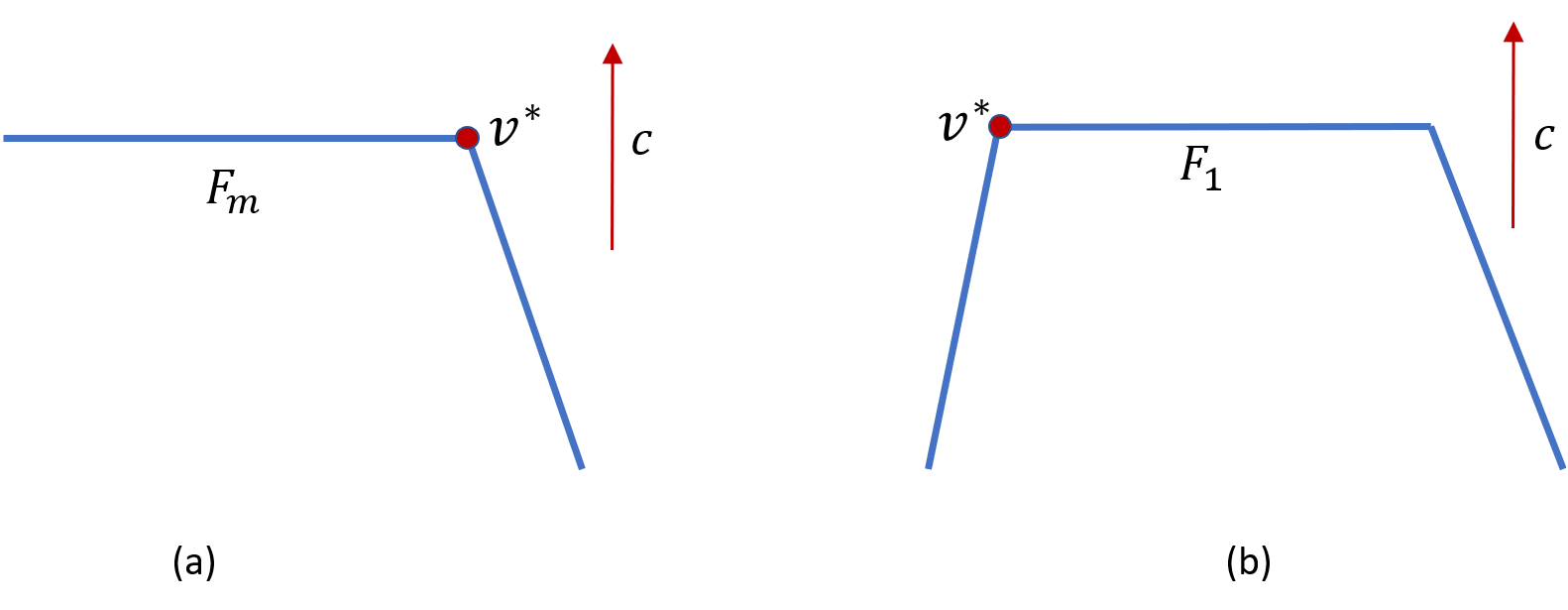}\centering
\caption{Illustration of \cref{ass:optimal-vertex}}
\label{fig:assumption}
\end{figure}

\begin{remark} A cutting plane algorithm typically works with a vertex solution, so it is natural to assume that $P$ is pointed. (The integer program can be solved with at most one Chv\'{a}tal cut  when  $P$ is a  polyhedron in the plane which is not pointed and it is known from standard results that this Chv\'{a}tal cut has a polynomial size).

If $P$ is a pointed polyhedron but $\max \{cx: x \in P\}$ is unbounded,  the integer program is either infeasible or unbounded. There are ways to overcome this, however it seems difficult to efficiently distinguish these two cases by only using cutting planes, even when $\dim(P)=2$.
\end{remark}

We will need the following theorem about the integer hulls of translated  cones in the plane.

\begin{theorem}[\cite{harvey1999computing}]\label{thm:2D-complexity}
Given a description $Ax\le b$ of a translated  cone $C \subseteq \R^2$,   $C_I$ has $O(\log \|A\|_\infty)$ facets.
  Furthermore, each facet of $C_I$ has a description $ax\le \beta$ where $\|a\|_\infty\le \|A\|_\infty$.
\end{theorem}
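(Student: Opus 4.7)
The plan is to enumerate the facets of $C_I$ one by one in clockwise order, using iterated tilts (\cref{def:tilt}) to pass from one facet to the next, and to control both the total number of facets and their coefficient sizes via a single integer-valued determinant potential.

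After an initial Chv\'{a}tal strengthening of $H_1$ and $H_2$ (which only divides each normal by a positive integer and hence does not increase $\|A\|_\infty$), write $C=(H_1,H_2)$ with $H_1:\,ax\le\beta$, $H_2:\,dx\le\delta$, and $a,d$ primitive. List the facets of $C_I$ in clockwise order as $F_0=H_1,F_1,\ldots,F_k=H_2$, with primitive integer normals $n^0=a,n^1,\ldots,n^k=d$, and set the potential $\Delta_i:=|n^i_1 d_2 - n^i_2 d_1|$. Each $\Delta_i$ is a nonnegative integer, with $\Delta_k=0$ and $\Delta_i\ge 1$ for $i<k$. To bound $k$, I would fix $i<k$ and produce $F_{i+1}$ from $F_i$ by iteratively tilting: start from the tilt $T_1$ of $F_i$ with respect to $(F_i,H_2)$ at the appropriate integer pivot. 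By \cref{le:tilt}(ii), if $T_1$ is a Chv\'{a}tal cut for the current cone, then $T_1=F_{i+1}$; otherwise, by \cref{le:tilt}(i), $T_1$ is Chv\'{a}tal strengthened, and by \cref{le:tilt}(iii) the quantity $dq-\delta$ measured at the new off-cone point $y'$ is at most half its former value. Replacing the role of $H_1$ by $T_1$ and re-tilting, after $\ell$ such halvings we must reach a Chv\'{a}tal cut (since $dq-\delta$ is a positive rational that cannot halve indefinitely), at which point we obtain $F_{i+1}$. The key observation is that $\Delta_{i+1}$ is bounded by the final value of $dq-\delta$ in this halving chain, so $\Delta_{i+1}\le \Delta_i/2^{\ell-1}$. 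Combining across all facets with the bound $\Delta_0\le|a_1d_2-a_2d_1|\le 2\|A\|_\infty^2$ from \cref{lem:poly-q} telescopes to $k = O(\log \|A\|_\infty)$.

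For the coefficient bound $\|n^i\|_\infty\le\|A\|_\infty$, I would use induction on $i$. The normal $n^{i+1}$ is the primitive integer vector orthogonal to the segment from the pivot $p$ of \cref{def:tilt} to either $x$ (when the tilt is a Chv\'{a}tal cut) or to $y'$ (otherwise); in both cases this vector points within the unit-area parallelogram $P$ determined by $F_i^=,\widehat H,W_0^=,W_1^=$. A Stern-Brocot-style computation using the unimodularity of $P$ expresses $n^{i+1}$ as a small nonnegative integer combination of $n^i$ and a primitive vector dominated coordinate-wise by $d$, yielding $\|n^{i+1}\|_\infty\le\max(\|n^i\|_\infty,\|d\|_\infty)\le\|A\|_\infty$.

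The principal obstacle is the clean bookkeeping that relates the per-tilt halving of $dq-\delta$ to an honest halving of the global potential $\Delta_i$ across consecutive facets, since the pivot and off-cone lattice point change after every tilt and between facets. Making this precise (and ensuring that the coefficient bound survives long non-Chv\'{a}tal tilt chains) is where the continued-fraction structure of integer hulls of two-dimensional cones genuinely enters the argument.
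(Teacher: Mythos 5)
This statement is not proved in the paper at all: it is imported from Harvey's work and then used as an \emph{input} to the analysis of the clockwise algorithm (Claim 2 in the proof of \cref{thm:algCone} invokes \cref{thm:2D-complexity} to control the tilts' coefficients). So you are attempting a genuinely new proof via the tilt machinery, and unfortunately both of its load-bearing steps are exactly the content of the cited theorem and are not established. First, the facet-counting step: the halving in \cref{le:tilt}(iii) controls the quantity $dq-\delta$ only \emph{within} the chain of tilts that produces a single new facet of $C_I$; it says nothing about how the potential behaves \emph{across} facets. Your claimed inequality $\Delta_{i+1}\le\Delta_i/2^{\ell-1}$ is vacuous when $\ell=1$ (a facet reached by a single Chv\'atal tilt forces no decrease), so the telescoping cannot bound $k$; and the bridge claim that ``$\Delta_{i+1}$ is bounded by the final value of $dq-\delta$ in the halving chain'' is unsubstantiated and goes in the wrong direction: \cref{lem:poly-q} shows the determinant bounds the slack $dq-\delta$, not conversely, and since $dq-\delta=(dp-\delta)+|n\wedge d|$ with $dp-\delta\le 0$ possibly very negative, the slack can be far smaller than the determinant, so a small terminal slack does not force a small $\Delta_{i+1}$. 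Indeed $\Delta_i=|n^i_1d_2-n^i_2d_1|$ need not even be monotone along the facets of $C_I$. Bounding the number of facets really requires the continued-fraction/Fibonacci-type growth argument (consecutive facet directions of the hull are mediant-related, so an associated determinant grows geometrically while staying at most $2\|A\|_\infty^2$), which is Harvey's argument and is absent here.

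Second, the coefficient bound is also assumed rather than proved. The assertion that a ``Stern--Brocot-style computation'' writes $n^{i+1}$ as a nonnegative integer combination of $n^i$ and a vector dominated by $d$ and that this yields $\|n^{i+1}\|_\infty\le\max(\|n^i\|_\infty,\|d\|_\infty)$ does not follow: a nonnegative integer combination of two vectors generally has larger infinity norm than either, and the cancellation you would need is precisely what must be proved (it is the second assertion of \cref{thm:2D-complexity}). Note also that you cannot borrow the paper's own coefficient control for tilts, since Claim 2 of \cref{thm:algCone} itself rests on \cref{thm:2D-complexity}; using it here would be circular. (Your use of \cref{def:tilt}, \cref{lem:poly-q} and \cref{le:tilt} alone is not circular, but those ingredients only give the per-facet chain-length bound of $O(\log\|A\|_\infty)$, not the facet count or the normal bound.) As written, the proposal therefore has genuine gaps at both conclusions of the theorem; your closing paragraph correctly identifies where the difficulty lies, but that difficulty is the theorem.
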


We also need the following lemma.

%
%
%

\begin{lemma}\label{le:sizeb}
Let $P$ be a pointed polyhedron in $\R^n$ such that $P_I\ne \emptyset$. Let $u$ be the largest infinity norm of a vertex of $P$ or $P_I$.  Let $ax\le \beta$ be an inequality which is valid for $P_I$ but is not valid for $P$. Then $|\beta|  \le nu\|a\|_\infty$. 
\end{lemma}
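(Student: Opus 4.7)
The plan is to bound $\beta$ from below and from above separately, using the two properties of the inequality (valid for $P_I$, invalid for $P$).

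For the lower bound, I would note that $P_I$ is nonempty and pointed (pointedness follows since $\rec(P_I)\subseteq\rec(P)$ and $P$ is pointed), so $P_I$ has at least one vertex $w$. Validity of $ax\le\beta$ on $P_I$ gives $aw\le\beta$, and the elementary estimate $|aw|\le\|a\|_1\|w\|_\infty\le n\|a\|_\infty\|w\|_\infty\le nu\|a\|_\infty$ yields $\beta\ge -nu\|a\|_\infty$.

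For the upper bound, invalidity on $P$ produces some $\hat x\in P$ with $a\hat x>\beta$. I would write $\hat x$ via the Minkowski--Weyl decomposition as a convex combination of vertices of $P$ plus a non-negative combination of extreme rays of $\rec(P)$. The key step is to show $ar\le 0$ for every $r\in\rec(P)$: otherwise $ax$ would be unbounded from above on $P$, and since $P$ is a rational polyhedron with $P_I\ne\emptyset$, a standard result (Meyer's theorem) gives $\rec(P_I)=\rec(P)$, so $ax$ would also be unbounded on $P_I$, contradicting validity of $ax\le\beta$ there. Hence the recession part contributes non-positively to $a\hat x$, and some vertex $v$ of $P$ must satisfy $av\ge a\hat x>\beta$, whence $\beta< av\le n\|a\|_\infty\|v\|_\infty\le nu\|a\|_\infty$.

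Combining the two inequalities gives $|\beta|\le nu\|a\|_\infty$. The only nontrivial ingredient is the identity $\rec(P_I)=\rec(P)$ for rational polyhedra with $P_I\ne\emptyset$, which I would cite rather than reprove; everything else is a one-line norm bound plus the vertex/ray decomposition of a pointed rational polyhedron.
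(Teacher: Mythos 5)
Your proof is correct and follows essentially the same route as the paper's: both arguments invoke Meyer's theorem to get $\rec(P)=\rec(P_I)$, deduce that $ax$ is bounded above on $P$ from its validity on $P_I$, and then sandwich $\beta$ between vertex values of $P_I$ and $P$ using the bound $|av|\le n\|a\|_\infty\|v\|_\infty\le nu\|a\|_\infty$. Your explicit Minkowski--Weyl decomposition is just an unpacking of the paper's one-line remark that finite maxima are attained at vertices.
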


\begin{proof}  As $P_I\ne \emptyset$, by Meyer's theorem (see e.g. Theorem 4.30 in~\cite{conforti2014integer}) $P$ and $P_I$ have the same recession cone. Therefore $\max_{x\in P}ax$ is finite and is larger than $\beta$, because  $ax\le\beta$ is not valid for $P$.
Since finite maxima are attained at vertices, we have that  $-nu\|a\|_\infty\le \max_{x\in P_I}ax\le \beta< \max_{x\in P}ax\le nu\|a\|_\infty$, which proves the lemma.
\end{proof}

Next, we will prove that when the relaxation is a translated cone, the clockwise algorithm finds the solution in polynomial time. An important point of the proof is that when the tilt is not a  Chv\'{a}tal cut, the pivot remains unchanged. Otherwise, a new facet of the integer hull of the translated cone is obtained when the tilt is a Chv\'{a}tal cut (see~\cref{le:tilt} (ii)). Since one has only a polynomial number of facets of the integer hull (see~\cref{thm:2D-complexity}), it suffices to show that only a polynomial number of tilts are made about any one fixed pivot. This is achieved by appealing to Lemmas~\ref{lem:poly-q} and~\ref{le:tilt}. We formalize this below.

\begin{theorem}\label{thm:algCone}
Let $Ax\le b$ be a description of a translated  cone $C\subseteq\R^2$ and let $c\in \Z^2\setminus\{0\}$ be such that $\max\{cx:x\in C\}$ is finite. Then the clockwise algorithm  solves the integer program $\max\{cx: x\in C\cap \Z^2\}$ in  $O((\log \|A\|_\infty)^2)$  iterations. Furthermore, there is a polynomial function $f(\cdot,\cdot)$ (independent of the data) such that every cut computed by the algorithm admits a description $ax\le \beta$ where  $\|a\|_\infty\le \|A\|_\infty$ and $|\beta|\le f(\|A\|_\infty,\|b\|_\infty)$.
\end{theorem}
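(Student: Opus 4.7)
The plan is to bound the iteration count by classifying the cuts produced in \cref{step:cut}---Chv\'atal strengthenings of the current late facet, Chv\'atal tilts, and non-Chv\'atal tilts---and controlling each class using \cref{thm:2D-complexity,lem:poly-q,le:tilt}. Since by part (i) of \cref{le:tilt} every tilt is already Chv\'atal strengthened, the strengthening branch of \cref{step:cut} can only be activated on an original facet of the input cone $C$, so it contributes at most a constant number of iterations; thus it suffices to bound the number of tilts.

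For the outer count, by part (ii) of \cref{le:tilt} each Chv\'atal tilt produces a new facet of $C_I$, and by \cref{thm:2D-complexity} $C_I$ has only $O(\log\|A\|_\infty)$ facets. Therefore the total number of Chv\'atal tilts throughout the run is $O(\log\|A\|_\infty)$.

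For the inner count, I consider a \emph{phase}: the iterations between two consecutive Chv\'atal tilts (or between the start of the algorithm and the first Chv\'atal tilt). I would argue that across a phase the early facet $H_2$ and its integer description $dx\le\delta$ are preserved, and so is the pivot $p$. Indeed, after a non-Chv\'atal tilt the new translated cone is $(T,H_2)$, and since $T^=$ meets $\Z^2$ at $p$ (which is still in $H_2$) and $y'$ (which is outside $H_2$) with the open segment $(p,y')$ free of integer points, $p$ is again the pivot of $(T,H_2)$ and $y'$ plays the role of $q$ in the next iteration. At the beginning of a phase \cref{lem:poly-q} bounds $dq-\delta$ by at most $2\|A\|_\infty^2$, and part (iii) of \cref{le:tilt} shows that $dq-\delta$ at least halves at every non-Chv\'atal tilt. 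Since $dq-\delta$ remains a positive integer throughout the phase, the phase contains at most $O(\log\|A\|_\infty)$ non-Chv\'atal tilts, giving the claimed $O((\log\|A\|_\infty)^2)$ iteration bound.

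For the coefficient bounds I argue as follows. A Chv\'atal tilt defines a facet of $C_I$, so \cref{thm:2D-complexity} directly gives $\|a\|_\infty\le\|A\|_\infty$, and the same bound is immediate for the Chv\'atal strengthening of an original facet of $C$. For a non-Chv\'atal tilt I would derive $\|a\|_\infty\le\|A\|_\infty$ by expressing the direction $y'-p$ of $T^=$ in terms of the primitive lattice vectors $e_1=q-p$ and $e_2=x-p$ that form the sides of the unit parallelogram in \cref{def:tilt} and relating the primitive perpendicular of $y'-p$ to the normal of $H_1$; this last step is the principal technical obstacle I anticipate, together with the verification that the pivot is indeed preserved across a phase. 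Finally, $|\beta|\le f(\|A\|_\infty,\|b\|_\infty)$ follows from \cref{le:sizeb} combined with a standard polynomial bound on the vertex coordinates of every intermediate relaxation $Q$ in terms of $\|A\|_\infty$ and $\|b\|_\infty$.
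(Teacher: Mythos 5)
Your iteration-count argument follows essentially the same route as the paper: the outer count via \cref{le:tilt}~(ii) and \cref{thm:2D-complexity} (with the additional observation, which the paper makes explicitly, that since all pivots lie in $C$ a Chv\'atal tilt is in fact a facet of $C_I$ and not merely of the integer hull of the currently processed cone), and the inner count via preservation of the pivot and of $H_2$ within a phase, the initial bound $dq-\delta\le 2\|A\|_\infty^2$ from \cref{lem:poly-q}, and the halving in \cref{le:tilt}~(iii) applied to an integer quantity. That part is sound.

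The genuine gap is exactly the step you flag yourself: the bound $\|a\|_\infty\le\|A\|_\infty$ for a \emph{non}-Chv\'atal tilt. This is not a side issue --- it is part of the ``Furthermore'' clause of the theorem (and is what \cref{cor:encoding} and \cref{thm:mainalgo} later rely on), so the proof is incomplete without it. Your proposed route (express $y'-p$ in the basis $q-p$, $x-p$ and control the primitive normal directly) is where the real difficulty sits: one has $y'-p=(q-p)+j(x-p)$ for some integer $j$ determined by where $W_1^=$ meets $H_2^=$, and bounding the primitive vector orthogonal to this direction amounts to re-deriving a case of Harvey's theorem by hand; nothing in \cref{def:tilt} alone gives the bound. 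The paper closes this with a different device: although the non-Chv\'atal tilt $T_i$ is \emph{not} a facet of the integer hull of the processed cone $(T_{i-1},H_2)$, it \emph{is} a facet of the integer hull of the auxiliary cone $(T_{i-1},H_2')$, where $H_2'$ is the translate of $H_2$ through $y'$ --- precisely because $p,y'\in T_i^=\cap\Z^2$ and no integer point of that auxiliary cone violates $T_i$. Since $H_2'$ has the same normal $d$ as $H_2$ and, by induction on the iterations, the normal of $T_{i-1}$ already satisfies the bound, \cref{thm:2D-complexity} applied to $(T_{i-1},H_2')$ yields $\|a\|_\infty\le\|A\|_\infty$ for $T_i$. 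Note this is an induction over all cuts (a non-Chv\'atal tilt may be built on a previous non-Chv\'atal tilt), so the bound must be carried along from iteration to iteration rather than established only at phase starts. For the right-hand side, the paper's version is slightly cleaner than yours: every cut is valid for $C_I$ (which is nonempty for a full-dimensional translated cone) and invalid for $C$ itself, so \cref{le:sizeb} applies directly with the vertices of $C$ and $C_I$, avoiding any bound on vertices of intermediate relaxations (which would otherwise circularly depend on the cut coefficients).
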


\begin{proof}
We use the same notation as in \cref{def:tilt}  and the fact that since $C$ is a translated  cone, if $p\in \Z^2$ is a pivot element of a cut computed by the algorithm, then $p\in C$.

Let $T_i$ be the cutting plane produced by the clockwise algorithm at iteration $i$, where we assume that $T_0$ is the Chv\'{a}tal strengthening of $H_1$. 
\medskip

\noindent{\sc Claim 1.} 
{\em If $T_i$ defines the early facet of $C \cap T_0 \cap \dots \cap T_i$, the clockwise algorithm computes an optimal solution in iteration $i+1$.}

\begin{cpf} In this case  $(T_{i-1}, T_{i})$ is the new translated cone whose apex is the pivot element $p_i$ of iteration $i$. As $p_i\in C\cap \Z^2$, at iteration $i+1$ the algorithm determines that $p_i$ is an optimal solution.
\end{cpf}

$T_i$ is the tilt (with pivot element $p_i$) with respect to the translated cone $(T_{i-1},H_2)$. Also recall that by \cref{le:tilt} (i), $T_i$ is Chv\'{a}tal strengthened. This fact will be important because we will work with the translated cone $(T_i,H_2)$ below and use notions from \cref{def:tilt} and results based on these notions, which assume that the facet $H_1 = T_i$ of the translated cone is Chv\'atal strengthened.\medskip

\noindent{\sc Claim 2.} 
{\em There is a polynomial function $f(\cdot,\cdot)$ such that $T_i$ admits a description $ax\le \beta$ where  $\|a\|_\infty\le \|A\|_\infty$ and $|\beta|\le f(\|A\|_\infty,\|b\|_\infty)$.}

\begin{cpf}  By induction on $i$. The base case $i=0$ is trivial.

We first show that $T_i$ admits a description $ax\le \beta$ where  $\|a\|_\infty\le \|A\|_\infty$.
If $T_i$ defines a Chv\'{a}tal cut with respect to the translated cone $(T_{i-1},H_2)$, as  by induction $T_{i-1}$ satisfies the claim, we are done by \cref{le:tilt} (ii) and \cref{thm:2D-complexity}.

  So we assume that $(T_{i-1},H_2)\cap W_1\ne \emptyset$ (where $W_1$ is as in \cref{def:tilt} with respect to $(T_{i-1},H_2)$).  Consider  the translated cone $(T_{i-1},H_2')$, where $H_2'$ is the translation of $H_2$ through $y'$ (see Figure 1(ii)).   As $T_i$ is the tilt with respect to $(T_{i-1},H_2)$,  we have that $p_i, y'\in T_{i}^=\cap \Z^2$. Furthermore, $T_{i-1}$ satisfies the claim by induction. It follows that $T_i$ is a facet of the integer hull of $(T_{i-1},H_2')$ and, by \cref{thm:2D-complexity}, $T_i$ admits a description $ax\le \beta$ where  $\|a\|_\infty\le \|A\|_\infty$.     
  
  Let $u$ be the largest infinity norm of a vertex of $C$ or $C_I$. Then $u$  is bounded by a polynomial function of $\|A\|_\infty$ and $\|b\|_\infty$ (see, e.g., \cite[Theorems 10.2 and 17.1]{sch}). Therefore, by \cref{le:sizeb}, there is a polynomial function $f(\cdot,\cdot)$ such that $|\beta|\le f(\|A\|_\infty,\|b\|_\infty)$. This completes the proof of the claim.
  \end{cpf}

  We finally show that in $O((\log \|A\|_\infty)^2)$  iterations  the clockwise algorithm  finds an optimal solution to the program $\max \{cx: x\in C\cap \Z^2\}$.   By Claim 1 if the cut $T_i$ becomes the early facet in iteration $i$, then the algorithm finds an optimal solution in iteration $i+1$. By Claim 2  all cuts $T_1, \ldots, T_i$ admit a description $ax\le \beta$ where  $\|a\|_\infty\le \|A\|_\infty$ and $|\beta|\le f(\|A\|_\infty,\|b\|_\infty)$.
  Therefore, by \cref{thm:2D-complexity}, it suffices to show that within at most $O(\log \|A\|_\infty)$ iterations beyond any particular iteration $i$, the algorithm either finds an optimal solution or computes the facet adjacent to $T_i$ of the integer hull of the translated cone $(T_i,H_2)$. Note that, as all pivot elements are in $C$, this is also a facet of $C_I$.
  
 By \cref{le:tilt} (ii), the facet adjacent to $T_i$ of the integer hull of the translated cone $(T_i,H_2)$ is obtained when $W_1\cap (T_i,H_2)=\emptyset$,  i.e., when $T_{i+1}$ is a Chv\'{a}tal cut (this $W_1$ is as in \cref{def:tilt} with respect to the translated cone $(T_i,H_2)$). If $W_1\cap (T_i, H_2)\ne\emptyset$,  by \cref{le:tilt} (iii), we have that $0<dy'-\delta\le \frac{dq-\delta}{2}$, where $dx\le \delta$ is the inequality defining $H_2$ in the description of $(T_i, H_2)$.  We now observe that the new translated cone to be processed is $(T_{i+1},H_2)$ with apex $y'$. Moreover, the pivot remains $p$ for the next iteration. Further, it is important that the facet $H_2$ has not changed. Thus, one may iterate this argument and since $dy'-\delta\in\Z$ at every iteration, by \cref{lem:poly-q}, after at most $O(\log\|A\|_\infty)$ iterations the algorithm will produce a Chv\'{a}tal cut.  
  \end{proof}

\begin{cor}\label{cor:encoding} Any cut derived during the execution of the clockwise cutting plane algorithm on any pointed, full-dimensional polyhedron $P$ admits a description $ax\le \beta$ where  $\|a\|_\infty\le \|A\|_\infty$ and $|\beta|\le f(\|A\|_\infty,\|b\|_\infty)$, where $f(\cdot, \cdot)$ is the function from \cref{thm:algCone}.
\end{cor}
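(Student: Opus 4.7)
The plan is to mimic the proof of \cref{thm:algCone} essentially verbatim, tracking how the bounds propagate across iterations when the late and early facets of the current relaxation are previously computed cuts rather than original facets of $P$. The argument is an induction on the iteration count $i$, with the base case $i=0$ vacuous.

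At iteration $i$, the current relaxation is $Q := P \cap T_1 \cap \cdots \cap T_{i-1}$, where $T_1, \ldots, T_{i-1}$ are the cuts previously produced. The algorithm examines the late facet $F_{m^Q}$ and the early facet $F_1$ of $Q$; each of these is either an original facet of $P$ (trivially satisfying the bounds) or a previous cut (satisfying the bounds by the inductive hypothesis). Thus the translated cone $C := (F_{m^Q}, F_1)$ has a description whose coefficient norms are at most $\|A\|_\infty$ and whose right-hand sides are at most $f(\|A\|_\infty, \|b\|_\infty)$.

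Next, I would observe that the cut $T_i$ produced in \cref{step:cut} depends only on $C$ and its pivot, and is computed exactly as in \cref{thm:algCone}: it is either the Chv\'{a}tal strengthening of $F_{m^Q}$, or the tilt of $F_{m^Q}$ about its pivot with respect to $C$. By the reasoning used in Claim 2 of \cref{thm:algCone}, $T_i$ is a facet of $C_I$ or of the integer hull of an auxiliary translated cone with the same coefficient bound. Applying \cref{thm:2D-complexity} to this cone then gives $\|a\|_\infty \leq \|A\|_\infty$ for the normal of $T_i$. For the right-hand side, \cref{le:sizeb} applied to $C$ (which has nonempty integer hull, being a $2$D translated cone) yields $|\beta| \leq 2 u_C \|a\|_\infty$, where $u_C$ is the maximum infinity-norm of a vertex of $C$ or $C_I$. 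Standard bounds (Cramer's rule for the apex of $C$, together with \cref{thm:2D-complexity} for the vertices of $C_I$) show that $u_C$ is polynomial in the description bounds of $C$, which by induction are polynomial in $\|A\|_\infty$ and $\|b\|_\infty$.

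The main obstacle, and the step requiring care, is ensuring that the polynomial bound on $|\beta|$ does not degrade with iteration count. Since the inductive hypothesis bounds the right-hand sides of the facets of $C$ by $f(\|A\|_\infty, \|b\|_\infty)$, the new bound on $|\beta|$ is a fixed polynomial in $\|A\|_\infty$ and $f(\|A\|_\infty, \|b\|_\infty)$. The function $f$ from \cref{thm:algCone} must therefore be chosen (once and for all) large enough to absorb this composition of polynomials; since a polynomial composed with another polynomial is still a polynomial in $\|A\|_\infty$ and $\|b\|_\infty$, a single choice of $f$ suffices uniformly across all iterations, completing the induction.
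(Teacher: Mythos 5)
There is a genuine gap, and it is exactly in the step you flag as "requiring care": your induction does not close. At each iteration you bound the new right-hand side by applying \cref{le:sizeb} to the \emph{current} translated cone $C=(F_{m^Q},F_1)$, whose facet right-hand sides are only known, by your inductive hypothesis, to be at most $f(\|A\|_\infty,\|b\|_\infty)$. The apex of $C$ then has norm up to roughly $2\|A\|_\infty f(\|A\|_\infty,\|b\|_\infty)$ (Cramer), so your bound on the new $|\beta|$ is of order $\|A\|_\infty^2\, f(\|A\|_\infty,\|b\|_\infty)$, which is \emph{strictly larger} than $f(\|A\|_\infty,\|b\|_\infty)$. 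Hence no single polynomial $f$ can satisfy the inductive step: you would need $f\ge c\,\|A\|_\infty^2 f$. The remark that "a polynomial composed with a polynomial is a polynomial" only helps for a \emph{constant} number of compositions, but here the composition depth equals the number of iterations, which is $\Theta(m(\log\|A\|_\infty)^2)$; unrolling gives a bound of the shape $(\,c\|A\|_\infty^2)^{O(m(\log\|A\|_\infty)^2)}$, which is not a polynomial function of $\|A\|_\infty$ and $\|b\|_\infty$ (it is not even independent of $m$), so the statement as claimed is not obtained. Note that \cref{thm:algCone} itself avoids this trap: in its Claim 2 the application of \cref{le:sizeb} is anchored to the \emph{original} cone $C$ and $C_I$ (every cut is valid for $C_I$ and invalid for $C$), so the quantity $u$ never depends on the iteration and nothing compounds.

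The paper's proof of the corollary is built around the same anchoring idea, which your proposal misses. When $P_I\neq\emptyset$, every cut produced is valid for $P_I$ (since $Q_i\supseteq P_I$) and invalid for $P$, so \cref{le:sizeb} applies directly to $P$, with $u$ the largest vertex norm of $P$ or $P_I$ --- a quantity depending only on the input data, uniformly over all iterations. The genuinely delicate part, which your argument does not address (and which your cone-by-cone scheme would also have to confront), is the case $P_I=\emptyset$, where \cref{le:sizeb} cannot be applied to $P$ at all: there the paper bounds every cut for which some vertex of $P$ survives via that surviving vertex ($aw\le\beta<\max_{x\in P}ax$), and handles the remaining "tail" cuts by reducing to \cref{thm:algCone} applied to a single fixed translated cone (formed by original facets and/or one already-bounded cut), so only a bounded number of polynomial compositions ever occurs. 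To repair your argument you would need to replace the per-iteration anchor (the current cone) by an iteration-independent anchor of this kind; as written, the bound on $|\beta|$ degrades multiplicatively at every iteration and the conclusion does not follow.
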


\begin{proof} 
The argument in Claim 2 of the proof of Theorem~\ref{thm:algCone} also applies here to show that $\|a\|_\infty\le \|A\|_\infty$, since we introduce new cutting planes by processing translated cones formed by original facet defining inequalities of $P$ or the previous cuts. 

To argue for the right hand side $\beta$, we consider two cases: when $P_I \neq \emptyset$ and when $P_I = \emptyset$. In the first case, the result follows from Lemma~\ref{le:sizeb}. In the second case, we again break the proof into two cases: when $P$ is bounded and when $P$ is unbounded. When $P$ is bounded, note that all the cuts except for the last one must be valid for at least one vertex of $P$ (otherwise, we would have proved infeasibility). For any such cut, let $w$ be the vertex that is valid and let $w'$ be a vertex that is cut off (which must exist because no cut in the algorithm is valid for $P$). Then, $-2\|w\|_\infty\|a\|_\infty\le aw\le \beta< \max_{x\in P}ax\le 2\|w'\|_\infty\|a\|_\infty$. This proves the claim for all cuts except for the last one. The last cut is obtained from a translated cone defined by original inequalities and/or previous cuts. Theorem~\ref{thm:algCone} now applies. 

We now tackle the case when $P$ is unbounded and $P_I = \emptyset$. This implies that $P$ has two unbounded facets. Let the half-planes corresponding to these unbounded facets be $H_1$ and $H_2$. We must have that $H_1 \cap H_2$ is a split set. Let the cuts introduced by the algorithms be $T_1, \ldots, T_k$. While at least one vertex of $P$ survives, the argument from above can be used to bound $|\beta|$. Thus, let $i \in \{1, \ldots, k\}$ be the smallest index such that $T_i$ cuts off every vertex of $P$.  $T_i$ must be derived from a translated cone using original inequalities and/or previous cuts and Theorem~\ref{thm:algCone} applies. If $i=k$, then we are done. Otherwise, after iteration $i$ we have a polyhedron with three facets $H_1$, $T_i$ and $H_2$, in clockwise order. If the new optimal vertex is at the intersection of $H_1$ and $T_i$, then the Chv\'atal strengthening of $H_1$ gives an empty polyhedron and the algorithm terminates. Else, if the new optimal vertex is the intersection of $T_i$ and $H_2$, then the new cutting plane $T_{i+1}$ will be a tilt of $T_i$ and will therefore also cut off every vertex of $P$. Repeating this argument, it follows that for all $i \leq j \leq k$, the half-plane $T_j$ cuts off every vertex of $P$, and $T_j \cap P = T_j \cap H_1 \cap H_2$. Moreover, all the cuts $T_i, \ldots, T_{k-1}$ are valid for the integer hull of the translated cone formed by $T_i$ and $H_2$, and Theorem~\ref{thm:algCone} applies. Finally, $T_k$ is either a Chv\'atal cut obtained by strengthening $H_1$, or is a cut valid for the translated cone formed by $T_{k-1}$ and $H_2$. In either case, we are done by previous arguments.
\end{proof}

We now extend~\cref{thm:algCone} to handle the case of a general pointed polyhedron $P\subseteq\R^2$.

\begin{definition}\label{def:facet-ordering}(Facet ordering)
We let $Q_i$ be the polyhedron computed at the beginning of  iteration $i$ of the clockwise algorithm and $T_i$ be the cutting plane  computed at iteration $i$. We start our iterations at $i=0, 1, \ldots$, so $Q_0=P$ and $Q_i=Q_{i-1}\cap T_{i-1}$. When $Q_i$ is full-dimensional, we let $F_{i,1},\dots, F_{i,m_i}$  be the facets of  $Q_i$ so that the optimal vertex of  $Q_i$ is the apex of $(F_{i,m_i},F_{i,1})$ and $T_i$ is either the Chv\'{a}tal strengthening or the tilt of $F_{i,m_i}$ with respect to  $(F_{i,m_i},F_{i,1})$.
\end{definition}

Note that, when $Q_i$ is full-dimensional, $T_i$ is either the early or the late facet of $Q_{i+1}$, as $T_i$ defines the optimal vertex chosen by the algorithm.

\begin{definition}\label{def:angle}(Potentially late and potentially early facets)
Given vectors  $a$, $b$, we define $\angle(a,b)$ as the clockwise angle between $a$ and $b$, starting from $a$.
When $Q_i$ is full-dimensional, let $a_{i,1},\dots, a_{i,m_i}$ be the normals of $F_{i,1},\dots, F_{i,m_i}$ (as defined in \cref{def:facet-ordering}). Then $\angle(a_{i,m_i},c)<180^\circ$ and  $\angle(c,a_{i,1})<180^\circ$. We define a facet $F$ of $Q_i$ with normal $a$ {\em potentially late} if either $F=F_{i,m_i}$ or $0<\angle(a,c)<180^\circ$ and {\em potentially early} if  either  $F=F_{i,1}$ or $0<\angle(c,a)\leq180^\circ$. 
 Note that if a facet of $Q_i$ 
 satisfies $\angle(c,a)=180^\circ$, then it cannot define the optimal vertex of $Q_j$, $j>i$.
\end{definition} 

%
 
 \begin{lemma}\label{le:early-late}
Consider iterations $i$ and $j > i$ and let $Q_i$ and $Q_j$ be the full-dimensional polyhedra defined in Definition \ref{def:facet-ordering} at these iterations. If $F$ is a potentially early facet of $Q_i$, then it cannot become a potentially late facet of $Q_j$ and  if $F$ is a potentially late facet of $Q_i$, then it cannot become a potentially early facet of $Q_j$.
 \end{lemma}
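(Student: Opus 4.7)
The plan is to exploit the fact that when $F$ persists as a facet of both $Q_i$ and $Q_j$, its supporting line $F^=$ is unchanged, so its outer normal $a$ and the clockwise angles $\angle(a,c)$, $\angle(c,a)$ are invariant across iterations. Consequently, any change of category must come from the special clauses ``$F = F_{k,m_k}$'' or ``$F = F_{k,1}$'' in \cref{def:angle}, and the proof reduces to a case analysis on the direction of $a$ with respect to $c$.

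In the generic cases with $a \ne \pm c$ the angle alone settles the matter. If $\angle(a,c) \in (0^\circ, 180^\circ)$ then $F$ is potentially late by the general clause; the general potentially-early clause fails because $\angle(c,a) = 360^\circ - \angle(a,c) > 180^\circ$, and $F$ cannot be $F_{k,1}$, since \cref{def:angle} enforces $\angle(c,a_{k,1}) < 180^\circ$. The reverse range $\angle(a,c) \in (180^\circ, 360^\circ)$ is symmetric, using the fact that $F_{k,m_k}$ requires $\angle(a_{k,m_k},c) < 180^\circ$. For the boundary case $a = -c$, the equality $\angle(a,c) = 180^\circ$ violates the strict inequality in both the general potentially-late clause and the condition for $F_{k,m_k}$, so $F$ is only potentially early; this matches the note at the end of \cref{def:angle}.

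The delicate case is $a = c$: here $F^=$ is perpendicular to $c$, so $F$ is necessarily the optimal facet of every $Q_k$ in which it persists, and by \cref{ass:optimal-vertex} it must be labeled either $F_{k,1}$ or $F_{k,m_k}$ according to whether the apex of $Q_k$ is the first or the last clockwise endpoint of $F$. To rule out a flip of this label across iterations, I would observe that the clockwise walking direction on $F^=$ equals the $90^\circ$ clockwise rotation of the (fixed) normal $a$, and then inspect the effect of the cuts produced by the algorithm. If $F = F_{k,m_k}$, the algorithm cuts $F$ itself, and either a Chv\'{a}tal strengthening replaces $F$ (so $F$ is not a facet of $Q_{k+1}$) or a tilt places the new apex at the integer pivot $p \in F$, so the algorithm terminates at iteration $k+1$. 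If $F = F_{k,1}$, then the cut is applied to $F_{k,m_k} \ne F$, and a direct case analysis on where the cut's boundary meets $F^=$ shows that whenever $F$ survives in $Q_{k+1}$, the new apex again lies at the clockwise-first end of $F$, so the label is preserved. Making this local inspection rigorous, and in particular showing that the cuts constructed by the algorithm cannot move the apex on $F^=$ past its prior clockwise-end position, is the main delicate step of the proof.
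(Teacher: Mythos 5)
Your approach matches the paper's proof exactly in structure: reduce to a case analysis on the fixed normal $a$ of the persisting facet, observe that whenever $a \ne c$ the general angular clauses alone pin down the category (using the constraints $\angle(a_{k,m_k},c)<180^\circ$ and $\angle(c,a_{k,1})<180^\circ$ to rule out the special clauses), and isolate $a=c$ as the only non-trivial case, where the category is controlled by \cref{ass:optimal-vertex}. The paper's proof is in fact even terser than your sketch: it calls all the $a\ne c$ cases ``obvious'' and, for $a=c$, simply states that $F$ ``remains potentially late or potentially early, by the choice of the optimal vertex''. Your final paragraph, where you label the $a=c$ case ``the main delicate step'' and only sketch the local inspection of the tilt's effect on $F^=$, is thus exactly the content the paper leaves implicit; the mechanism you gesture at (the clockwise walking direction on $F^=$ is fixed by $a$, and the cuts only truncate $F$ from the $v^*$ end, so the vertex cannot migrate to the opposite end of $F$) is the right way to flesh it out and does close the gap, though you have not written it down in full. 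So: same route, essentially complete, with the same step left at the sketch level as in the paper itself.
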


\begin{proof} Let $a$ be the normal of $F$.  The result is obvious when $\angle(a,c)>0$ and $\angle(c,a)>0$.  
If $\angle(a,c)=0$, i.e., $F=\text{arg}\max_{x\in Q_i}cx$, then $F$ remains potentially late or potentially early, by the choice of the optimal vertex; see \cref{ass:optimal-vertex}.
\end{proof} 

\begin{definition}(Families of facets and tilts)
Given a full-dimensional pointed polyhedron $P=Q_0 \subseteq \R^2$  and an objective vector $c \in \Z^2\setminus\{0\}$ such that $\max \{cx: x \in P\}$ is finite, 
 let $F_{0,1},\dots, F_{0,k}$ be the potentially early facets of $Q_0$ and $F_{0,k+1},\dots, F_{0,m_0}$ be the potentially late facets of $Q_0$. We say that facet $F_{0,\ell}$ of $P$ belongs to {\em family} $\ell$ and we recursively assign a cut $T_i$ produced at iteration $i$ of the algorithm to the family of the late inequality that is used to produce $T_i$. We finally say that family $\ell$ is  {\em extinct} at iteration $k$ if no  facet of $Q_k$ belongs to family $\ell$.
\end{definition}

\begin{remark}\label{rem:late-families}
By \cref{le:early-late}, no facet  that is potentially early can become potentially late and vice versa; therefore, all  cuts produced by the clockwise algorithm belong to the $m-k$ families associated with the potentially late facets of $Q_0$ (assuming the input to the algorithm is full-dimensional; otherwise, the algorithm terminates in at most two iterations ---see \cref{step:dim} in Algorithm \ref{alg:buildtree}). 
\end{remark}

\begin{figure}
\includegraphics[width=0.75\textwidth]{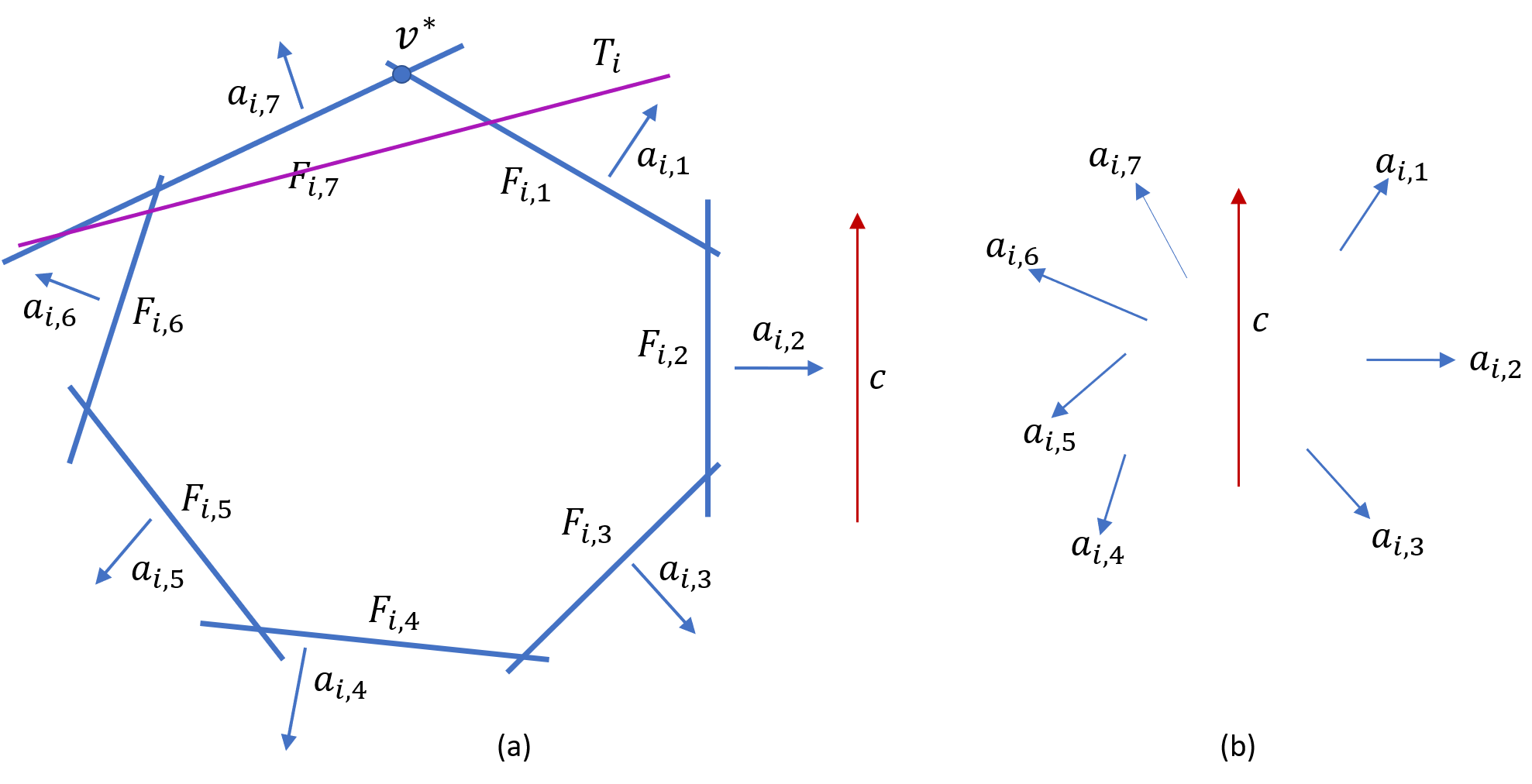}\centering
\caption{Illustration of the proof of \cref{thm:mainalgo}. In (a), $m_i=7$, $F_{i,1}$ is the early facet, and $F_{i,7}$ is the late facet as defined in \cref{def:early-late}. In (b), the left-hand side normals correspond to potentially late families, while those at right-hand side belong to potentially early families. Thus, $E_i = 3$ ($L_i$ will depend on the original set of facets).}
\label{fig:thm217a}
\end{figure}

\begin{figure}
\includegraphics[width=0.5\textwidth]{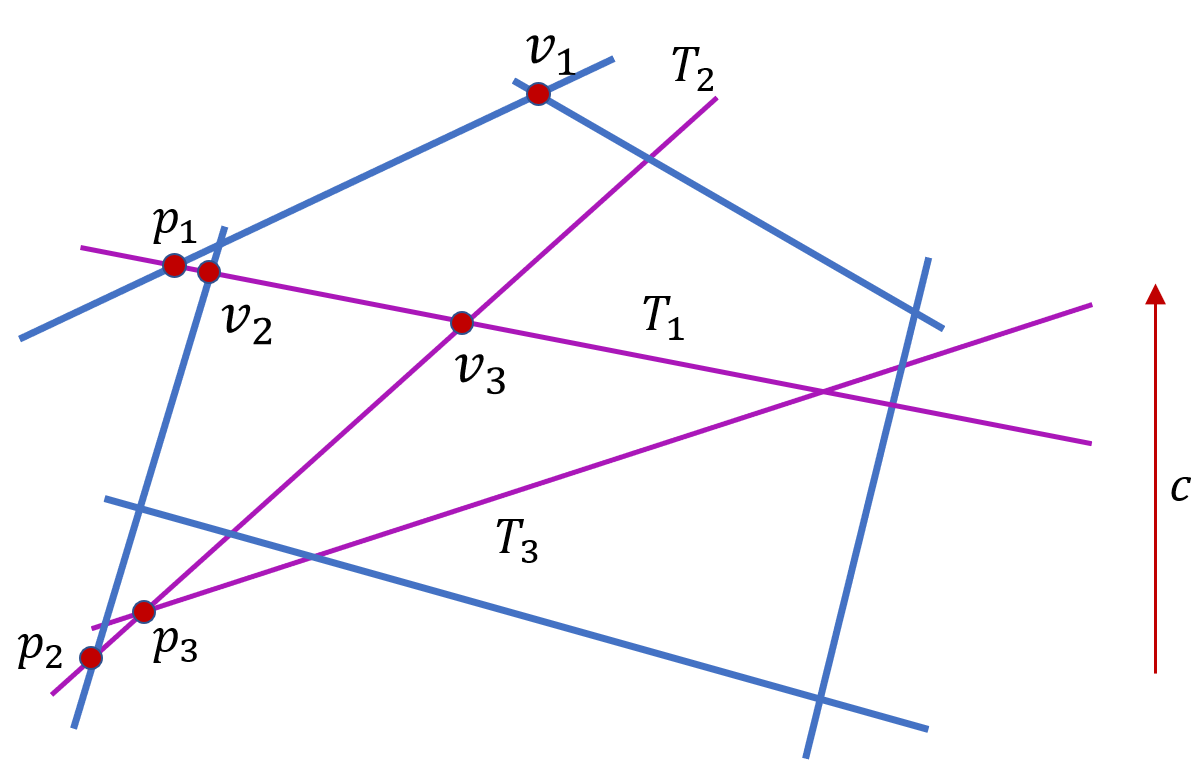}\centering
\caption{Illustration of an instance $P$ with $3$ iterations of the algorithm producing cuts $T_1$, $T_2$ and $T_3$ with pivots $p_1$, $p_2$ and $p_3$ respectively. The consecutive LP optimal solutions are labeled as $v_1$, $v_2$ and $v_3$. }
\label{fig:thm217b}
\end{figure}

\begin{theorem}\label{thm:mainalgo}
Let $Ax\le b$ be a description of  a pointed polyhedron $P\subseteq\R^2$ with $m$ facets, and $c\in\Z^2\setminus\{0\}$ be such that $\max \{cx: x \in P\}$ is finite. Then the clockwise algorithm solves the integer program $\max \{cx: x \in P\cap \Z^2\}$ in  $O(m(\log \|A\|_\infty)^2)$ iterations.
\end{theorem}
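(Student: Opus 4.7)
The plan is to partition the cuts produced by the clockwise algorithm according to the family decomposition of \cref{rem:late-families} and bound each family's contribution using the translated cone analysis of \cref{thm:algCone}. By \cref{rem:late-families}, every cut belongs to one of at most $m - k \le m$ families associated with potentially late facets of $P$. The goal is to show that each such family contributes at most $O((\log\|A\|_\infty)^2)$ cuts; summing over families then yields the bound $O(m(\log\|A\|_\infty)^2)$.

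Fix a potentially late family $\ell$ and let $i_1 < i_2 < \cdots$ be the iterations at which the produced cut $T_{i_j}$ lies in family $\ell$, equivalently those at which the late facet $F_{i_j, m_{i_j}}$ of $Q_{i_j}$ belongs to family $\ell$. By the recursive definition of family and \cref{le:early-late}, every family-$\ell$ facet is potentially late throughout its existence, and the chain of family-$\ell$ cuts consists of tilts (or Chv\'atal strengthenings) of the current family-$\ell$ late facet with respect to a translated cone $(F_{i_j, m_{i_j}}, F_{i_j, 1})$ whose early face $F_{i_j, 1}$ may depend on $j$.

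I would then replicate within family $\ell$ the two ingredients of the proof of \cref{thm:algCone}. First, by \cref{le:tilt}(ii), every Chv\'atal cut in family $\ell$ contributes a new facet of the integer hull of the relevant translated cone based at the family-$\ell$ pivot; \cref{thm:2D-complexity} together with the coefficient bound from \cref{cor:encoding} limits the number of such facets to $O(\log\|A\|_\infty)$. Second, between two consecutive Chv\'atal cuts in family $\ell$ the pivot is preserved (non-Chv\'atal tilts do not change the pivot, as in \cref{thm:algCone}), and by \cref{le:tilt}(iii) a positive integer quantity $dy' - \delta$ halves at each such tilt, with initial value bounded by $|a_1 d_2 - a_2 d_1| = O(\|A\|_\infty^2)$ via \cref{lem:poly-q}. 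This yields at most $O(\log\|A\|_\infty)$ non-Chv\'atal tilts between consecutive Chv\'atal cuts, and so $O((\log\|A\|_\infty)^2)$ cuts per family.

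The main obstacle is justifying that these per-family arguments survive the interleaving of iterations coming from other families. Concretely, one must verify that within the family-$\ell$ subsequence the pivot on the current family-$\ell$ late facet is genuinely preserved during a non-Chv\'atal block, and that a stable ``reference'' half-plane playing the role of $H_2$ in \cref{thm:algCone} can be identified so that the halving estimate of \cref{le:tilt}(iii) still applies, even though the actual early facet $F_{i_j, 1}$ used by the algorithm may change with $j$. I expect the resolution to use \cref{le:early-late} together with the clockwise ordering of normals to show that cuts produced in other families only modify the portion of $Q_i$ away from the family-$\ell$ pivot, and hence do not affect the local tilt geometry governing the family-$\ell$ halving sequence.
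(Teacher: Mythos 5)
There is a genuine gap, and it sits exactly at the point you flag and then defer: the per-family bound. Your plan needs the claim that each potentially late family contributes only $O((\log\|A\|_\infty)^2)$ cuts, but this is not true (and not what the paper proves). Within a single family the reference early facet can change many times: a tilt of the family's late facet can render the current early facet redundant, after which the next facet of $Q_i$ becomes the new early facet and the whole machinery restarts --- the pivot may change, the halving quantity $dy'-\delta$ of \cref{le:tilt}(iii) is measured against a new $(d,\delta)$, and the Chv\'atal cuts produced need not all be facets of one fixed integer hull (unlike in \cref{thm:algCone}, the pivot need not be feasible for $P$, so the relevant cone and its integer hull change with the reference). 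A single family can therefore burn through $\Omega(m)$ early facets and produce far more than $O((\log\|A\|_\infty)^2)$ cuts; only the \emph{aggregate} over all families is $O(m(\log\|A\|_\infty)^2)$. Your suggested fix --- that cuts from other families only act away from the family-$\ell$ pivot --- does not address this, because the destabilization of the reference half-plane is caused by the family's own cuts and by the optimal vertex jumping onto one of them, not by interference from other families.

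The paper's proof handles this with a global amortization rather than a per-family count. It defines a phase as a maximal run of iterations during which both the family of the late facet and the \emph{normal of the early facet} stay fixed; within a phase the algorithm behaves exactly as on the translated cone $(F_{i,m_i},F_{i,1})$, so \cref{thm:algCone} (plus \cref{cor:encoding}) bounds the phase length by $O((\log\|A\|_\infty)^2)$. The number of phases is then bounded by $2m$ via the potential $E_i+2L_i$ ($E_i$ = number of potentially early facets, $L_i$ = number of live families whose last inequality is potentially late), which decreases at each phase boundary. Proving that decrease requires a case analysis you do not have: when the new cut becomes the late facet, the old early facet is redundant and $E$ drops; when instead the new cut becomes the \emph{early} facet (pivot infeasible), one must show that the original facet of that family and all its previous tilts become redundant, so the family goes extinct and $L$ drops, compensating the unit increase in $E$ --- hence the coefficient $2$ on $L$. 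This extinction argument (an explicit comparison of normal cones using \cref{le:early-late} and \cref{ass:optimal-vertex}) is the missing ingredient; without it, you have no bound on how many times the reference early facet can change, and the summation over families does not go through.
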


\begin{proof}
We refer to the definitions of $Q_i$ and $T_i$, and when $Q_i$ is full-dimensional, to the definitions of $F_{i,1}, \dots, F_{i, m_i}$ with corresponding normals $a_{i,1}, \dots, a_{i,m_i}$ (see \cref{def:facet-ordering} and \cref{def:angle}).
In this case, we assume $F_{i,1},\dots, F_{i,k_i}$ are potentially early and $F_{i,k_i+1},\dots, F_{i,m_i}$ are potentially late.
Moreover, let $E_i$ be the number of facets of $Q_i$ that are potentially early (i.e., $E_i=k_i$) and let $L_i$ be the number of families that are not extinct at iteration $i$ and such that the last inequality added to the family is potentially late. See Figure~\ref{fig:thm217a} for an illustration of these notations. Figure~\ref{fig:thm217b} illustrates some iterations of a potential run of the algorithm.
 
By \cref{thm:algCone}, there exists a function $z\mapsto g(z)$, where $g\in O((\log z)^2)$, such that the clockwise algorithm applied to any translated  cone with description $A'x\le b'$ terminates in $g(\|A'\|_\infty)$ iterations. Define $t:=g(\|A\|_\infty)$.
\medskip

\noindent{\sc Claim.} {\em Assume $\dim(Q_i) = 2$. Let $j$ be the largest natural number such that at iteration $i+j$, $\dim(Q_{i+j}) = 2$, $F_{i+j,m_{i+j}}$ is in the same family as $F_{i,m_i}$, and $F_{i+j,1}$ and $F_{i,1}$ are both defined by the same normal. 
Then
\begin{enumerate}
\item $j \leq t$, and
\item either $\dim(Q_{i+j+1}) \leq 1$, or the algorithm terminates at iteration $i+j+1$, or $E_{i+j+1}+2L_{i+j+1}\le E_{i}+2L_{i}-1$.
\end{enumerate}}

\begin{cpf} 1.\ follows from \cref{thm:algCone} and \cref{cor:encoding}, after observing that during iterations $i,\dots,i+j$ the algorithm computes the same cuts as those that it would compute if the polyhedron at iteration $i$ was the translated cone $(F_{i,m_i},F_{i,1})$.

We now prove 2.
Suppose $\dim(Q_{i+j+1}) = 2.$ We will establish that either the algorithm terminates at iteration $i+j+1$ or $E_{i+j+1}+2L_{i+j+1}\le E_{i}+2L_{i}-1$. Let $(F_{i+j+1,m_{i+j+1}},F_{i+j+1,1})$ be the translated cone at iteration $i+j+1$. Recall that, by the choice of the optimal vertex, $T_{i+j}$ must be either $F_{i+j+1,m_{i+j+1}}$ or $F_{i+j+1,1}$. We distinguish two cases. \smallskip

\noindent{\bf Case 1} Assume $T_{i+j} = F_{i+j+1,m_{i+j+1}}$.  Since $T_{i+j}$ is in the same family as $F_{i+j,m_{i+j}}$, which is in the same family as $F_{i,m_i}$, by definition of $q$ we must have that $F_{i,1}$ is distinct from $F_{i+j+1,1}$. Then $F_{i,1}$ is a redundant inequality for $Q_{i+j+1}$. Therefore, by \cref{le:early-late}, $E_{i+j+1} \leq E_{i} - 1$ and $L_{i+j+1} = L_i$. Thus, $E_{i+j+1}+2L_{i+j+1}\le E_{i}+2L_{i}-1$.\smallskip

\noindent{\bf Case 2}  Assume $T_{i+j}=F_{i+j+1,1}$. Let $p$ be the pivot element of the translated cone $(F_{i+j,m_{i+j}},F_{i+j,1})$. If $p$ is feasible, i.e., $p \in P \cap \Z^2$, then $p$ will be the optimal integral vertex in the iteration $i+j+1$ and the algorithm terminates. Else $p$ is infeasible. This means that $p$ must violate the inequality defining $F_{i+j+1,m_{i+j+1}}$. Consider the facet $F$ of $P$ that is the original facet of $P$ from the same family as $T_{i+j}$. We will now show that $F$ and all the inequalities in this family except for $T_{i+j}$ are redundant for $(F_{i+j+1,m_{i+j+1}},T_{i+j})$ and therefore for $Q_{i+j+1} \subseteq (F_{i+j+1,m_{i+j+1}},T_{i+j})$. Since we have processed this family during the algorithm, there must have been an optimal vertex defined by $(F, F')$ for some inequality $F'$ that is facet defining at some point during the algorithm. Let $Q'$ be the polyhedron computed as in Definition \ref{def:facet-ordering} at the iteration of the algorithm when the vertex $v$ defined by $(F, F')$ was optimal.

Since $F_{i+j+1,m_{i+j+1}}$ is not redundant for $Q'$ (as it is not redundant for $Q_{i+j+1}$ which is a subset of $Q'$), and the inequality defining $F_{i+j+1,m_{i+j+1}}$ is valid for the optimal vertex $v$ of $Q'$, its normal vector cannot be contained in the cone generated by the normals of $F$ and $F'$. Since the vertex defined by $(F, F')$ was optimal for the relaxation $Q'$, $c$ is contained in the cone generated by the normals of $F$ and $F'$. Thus, the normal of $F_{i+j+1,m_{i+j+1}}$ cannot be contained in the cone generated by the normal of $F$ and $c$. This means that the normal of $F$ is contained in the cone between the normal of $F_{i+j+1,m_{i+j+1}}$ and $c$, since both $F$ and $F_{i+j+1,m_{i+j+1}}$ are late facets at some time during the algorithm. Moreover, the normals of the inequalities in the family of $F$ are contained in the cone generated by the normal of $F$ and $T_{i+j}$, and therefore, in the cone generated by the normals of $F_{i+j+1,m_{i+j+1}}$ and $T_{i+j}$. Since our current optimal vertex is defined by $F_{i+j+1,m_{i+j+1}}$ and $T_{i+j}$, all these inequalities from the family must be redundant for $(F_{i+j+1,m_{i+j+1}},T_{i+j})$.

Thus, we have established that $F$ and all the inequalities in its family except for $T_{i+j}$ are redundant for $Q_{i+j+1}$. Since $T_{i+j}$ is from the same family and is early at iteration $i+j+1$, we must have $L_{i+j+1} \leq L_i - 1$ by \cref{le:early-late}. Moreover, $T_{i+j}$ is the only new early facet, and therefore, $E_{i+j} \leq E_i + 1$ by \cref{le:early-late}. Thus, $E_{i+j+1}+2L_{i+j+1}\le E_{i}+2L_{i}-1$. This completes the proof of the claim.
\end{cpf}

By the above claim, in at most $O(\log \| A\|_\infty^2)$ iterations after iteration $i$, either the algorithm terminates or the number $E_i + 2L_i$ must decrease by at least 1. Since the maximum value of $E_i + 2L_i$ is at most $2m$, we have the result.
\end{proof}

\begin{remark}
The upper bound on the number of iterations given in the above theorem does not depend on $c$.
\end{remark}

\begin{remark}
By \cref{le:tilt},  when $C\cap W_1\ne \emptyset$, the tilt $T$ produced by the algorithm may not be a facet of the split closure of the translated cone $C$. 
However, 
this property is crucial for the convergence of the algorithm.  Indeed, if the ``best cut'' is used, the algorithm may not converge as shown by the following example.

Define $p_0:=3$ and $p_{i+1}:= 2p_i-2$ for all integers $i\ge1$. Given $i\geq 0$, consider the following integer program, which we denote by $P_i$:
\begin{align}
     \max ~&x_2\\
    \mbox{s.t.} ~ &x_1\leq 4\label{eq:1}\\
    &(2p_i-1)x_1-(4p_i-4)x_2\geq 0\label{eq:2}\\
    &5x_1-8x_2 \geq 0\label{eq:3}\\
    &x_1,~x_2\in \Z
\end{align}
(Note that for $i=0$ the inequalities \cref{eq:2} and \cref{eq:3} coincide.)
The optimal solution of the continuous relaxation is $\left(4, \frac{2p_i-1}{p_i-1}\right)\notin \Z^2$, which is the unique point satisfying both constraints \cref{eq:1} and \cref{eq:2} at equality.

We will use the same notation as in \cref{def:tilt}, where $H_1$ and $H_2$ are the two half-planes defined by \cref{eq:2} and \cref{eq:1}, respectively. Since $(2p_i-1)$ and $(4p_i-4)$ are coprime numbers, $\hat{H}$ is defined by the equation $(2p_i-1)x_1-(4p_i-4)x_2=1$. Combined with the fact that the pivot is $p=(0, 0)$, this implies that $q = (4p_i-4,2p_i-1)$. 

We claim that $x=(-2p_i + 3, -p_i+1)$ and $y=(2p_i-1,~p_i)$. This follows from the following three observations: (i) these two points are integer and belong to $\hat H$; (ii) $-2p_i+3<4<2p_i-1$ (because this is true for $i=0$ and $p_i$ increases as $i$ increases); (iii) $y-x=q-p$.

It follows that $W_1^=$ is defined by the equation $(p_i-1)x_1-(2p_i-3)x_2 = 1$. This line intersects the edge $\left\{(x_1, x_2): x_1=4,~x_2\leq \frac{2p_i-1}{p_i-1}\right\}$ of the continuous relaxation at $q'= \left(4,\frac{4p_i-5}{2p_i-3}\right)$ (note that $\frac{4p_i-5}{2p_i-3}<\frac{2p_i-1}{p_i-1}$). Thus the strongest cut would be $(4p_i-5)x_1-(8p_i-12)x_2\ge0$. Since $p_{i+1}=2p_{i}-2$, the cut can be written as $(2p_{i+1}-1)x_1-(4p_{i+1}-4)x_2\geq 0$. When we add this cut to the continuous relaxation, \cref{eq:2} becomes redundant and we obtain problem $P_{i+1}$. Then this procedure never terminates.
\end{remark}

\section{Polynomiality of the split closure}\label{sec:closure}

In this section we prove the following result:

\begin{theorem}\label{thm:split-closure}
Let $Ax\le b$ be a description of a polyhedron $P\subseteq\R^2$ consisting of $m$ inequalities. Then the split closure of $P$ admits a description whose size is polynomial in $m$, $\log\|A\|_\infty$ and $\log\|b\|_\infty$.
\end{theorem}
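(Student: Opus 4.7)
The plan is to reduce the bound on the description of $P^{\rm split}$ to analogous bounds for the split closures of the translated cones at each vertex of $P$, and then leverage the tilt machinery developed in \cref{sec:algorithm}. First I would dispose of the degenerate cases (non-pointed $P$ or $\dim P \le 1$) by standard arguments: in these cases a single Chv\'{a}tal cut or a direct polyhedral computation suffices, so I assume henceforth that $P$ is pointed and full-dimensional with $m$ facets.

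The core geometric claim is a local decomposition
\[
P^{\rm split} = P \cap \bigcap_{v \in \vert(P)} C_v^{\rm split},
\]
where $C_v$ denotes the translated cone obtained by intersecting the two half-planes that define the two facets of $P$ meeting at $v$. The inclusion ``$\subseteq$'' is immediate from $P \subseteq C_v$. The reverse inclusion would use the fact that in $\R^2$ every facet-defining split cut of $P^{\rm split}$ separates some vertex of $P$, and near that vertex it coincides with a valid split cut for the surrounding cone; equivalently, a split disjunction that cuts $P$ nontrivially must cut at least one vertex $v$, and the corresponding facet of $P^{\pi,\pi_0}$ is then a valid split cut of $C_v$.

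The main technical step is to show that, for any translated cone $C$ whose description has $\|\cdot\|_\infty$-norms bounded by $\|A\|_\infty$ and $\|b\|_\infty$, $C^{\rm split}$ admits a description of size polynomial in $\log\|A\|_\infty$ and $\log\|b\|_\infty$. My plan is: (i) use \cref{thm:2D-complexity} to bound the number of facets of $C_I$ (which are a fortiori facets of $C^{\rm split}$) by $O(\log\|A\|_\infty)$; (ii) argue that any remaining facet of $C^{\rm split}$ arises as a tilt, in the sense of \cref{def:tilt}, about some vertex of $C_I$ acting as pivot; (iii) for each such pivot, combine \cref{lem:poly-q} with the halving property of \cref{le:tilt}(iii) to bound the number of non-equivalent tilts by $O(\log\|A\|_\infty)$. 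Multiplying yields $O((\log\|A\|_\infty)^2)$ facets per cone, and each such facet has polynomial encoding length by \cref{le:sizeb} together with the argument used in Claim~2 of the proof of \cref{thm:algCone}.

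Summing over the $m$ vertices of $P$ (plus the $m$ original facets of $P$) then gives $O(m(\log\|A\|_\infty)^2)$ facets of $P^{\rm split}$ overall, each with polynomial encoding length, which yields the theorem. The main obstacle I anticipate is step (ii) above --- showing that every non-integer-hull facet of $C^{\rm split}$ is captured by a tilt about a vertex of $C_I$ --- since the results in \cref{sec:algorithm} only prove that the tilts produced by the clockwise algorithm are valid split cuts, not that tilts about vertices of $C_I$ exhaust all facets of $C^{\rm split}$. Establishing this structural characterization will likely require a careful case analysis of how an arbitrary split disjunction $(\pi,\pi_0)$ interacts with $C$ and with the two integer points it pins down on the boundaries of the split set, together with a domination argument showing that any candidate facet not of tilt form is redundant given the tilts and the facets of $C_I$.
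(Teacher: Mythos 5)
There are two genuine gaps in your plan. First, your decomposition $P^{\rm split}=P\cap\bigcap_{v}C_v^{\rm split}$ over \emph{vertex} cones is not justified by the argument you sketch: a facet of $P^{\pi,\pi_0}$ need not be ``a valid split cut of $C_v$'', because $P\subseteq C_v$ gives $P^{\pi,\pi_0}\subseteq C_v^{\pi,\pi_0}$, i.e.\ validity goes in the wrong direction (the chord of $P^{\pi,\pi_0}$ can cut into $C_v^{\pi,\pi_0}$, e.g.\ when the two endpoints of the new facet of $P^{\pi,\pi_0}$ lie on non-adjacent facets of $P$, with several vertices of $P$ strictly inside the split set). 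The paper avoids having to prove any such statement by invoking the corner-relaxation result of Andersen--Cornu\'ejols--Li (\cref{lem:corner}), which decomposes the split closure over \emph{all} $\binom m2$ bases of $Ax\le b$, feasible or infeasible; that coarser decomposition is enough for a polynomial bound, whereas your vertex-only version would itself need a proof that you do not supply.

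Second, and more seriously, the step you yourself flag as the obstacle --- that every non-Chv\'atal facet of $C^{\rm split}$ is a tilt (in the sense of \cref{def:tilt}) about a vertex of $C_I$, with the halving property of \cref{le:tilt}(iii) bounding the number per pivot --- is exactly the missing content, and it points in the wrong direction. The tilt of \cref{def:tilt} is a deliberately weakened cut (the definition itself notes it is not the best cut for the disjunction, and the remark following \cref{thm:mainalgo} observes that tilts are in general not facets of the split closure), and \cref{le:tilt}(iii) is an algorithmic progress measure relative to the fixed facet $H_2$ of the cone currently being processed, not a bound on the number of split-closure facets sharing a pivot. Also, your parenthetical that the facets of $C_I$ ``are a fortiori facets of $C^{\rm split}$'' is backwards: $C_I\subseteq C^{\rm split}$, so facets of $C_I$ are generally not even valid for the split closure (otherwise the split rank would always be $1$). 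The paper's actual route is a domination analysis rather than a tilt count: it splits effective cuts into Chv\'atal cuts (handled by Bockmayr--Eisenbrand, \cref{thm:CG-closure}) and cuts from split sets crossing a facet $F^i_I$ of $C_I$, and shows via \cref{lem:find-canonical} and \cref{lem:two-sides-dominate} that the latter are all dominated by just two canonical cuts $H(J^i_0,\hat J^i)$, $H(J^i_{t_i},\hat J^i)$ per facet of $C_I$ (\cref{thm:IH-and-CG}), which combined with \cref{thm:2D-complexity} gives the polynomial bound. To salvage your outline you would either have to prove the structural characterization in your step (ii) --- which, as stated, appears false --- or replace steps (ii)--(iii) by a domination argument of the paper's type.
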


We will make use of the following result, which holds in any fixed dimension.

\begin{theorem}\label{thm:CG-closure}
\cite{bockmayrEisenbrand2001}
Let $d\ge1$ be a fixed integer and let $Ax\le b$ be a description of a polyhedron $P\subseteq\R^d$ consisting of $m$ inequalities. Then the Chv\'atal closure of $P$ admits a description whose size is polynomial in $m$, $\log\|A\|_\infty$ and $\log\|b\|_\infty$.
\end{theorem}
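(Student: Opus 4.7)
My plan is to bound $P^{\rm split}$ by sandwiching it between two polynomial-size polyhedra, using \cref{thm:CG-closure} (polynomial Chv\'atal closure) and \cref{thm:2D-complexity} (polynomial integer hulls of translated cones in $\R^2$) as black-box ingredients.

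First, I would establish that for any translated cone $C \subseteq \R^2$ one has $C^{\rm split} = C_I$. Every facet of $C_I$ has a primitive integer normal and, since by Meyer's theorem the recession cone of $C_I$ equals that of $C$, the corresponding linear functional is bounded above over $C$; thus the facet inequality is the Chv\'atal strengthening of a valid inequality for $C$, hence a split cut. Combined with the obvious inclusion $C_I \subseteq C^{\rm split}$, this gives the equality. Applying this with $C = C_v$ the translated cone at each vertex $v$ of $P$, and using $P \subseteq C_v$, yields $P^{\rm split} \subseteq C_v^{\rm split} = (C_v)_I$ for every $v$. Combined with $P^{\rm split} \subseteq P^{\rm CG}$ (the Chv\'atal closure, since every Chv\'atal cut is a split cut), this gives $P^{\rm split} \subseteq R$, where
$$R := P^{\rm CG} \cap \bigcap_v (C_v)_I,$$
the second intersection ranging over the vertices of $P$. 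By \cref{thm:CG-closure} the Chv\'atal part has polynomial size; by \cref{thm:2D-complexity} each $(C_v)_I$ contributes $O(\log \|A\|_\infty)$ facets with $\|a\|_\infty \le \|A\|_\infty$, and by \cref{le:sizeb} together with the standard bound on vertex sizes these facets have polynomial bit-length right-hand sides. So $R$ has a description polynomial in $m$, $\log\|A\|_\infty$, and $\log\|b\|_\infty$.

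The crux is then the matching inclusion $R \subseteq P^{\rm split}$: for every split disjunction $(\pi,\pi_0)$ and every $x^* \in R$, one must show $x^* \in P^{\pi,\pi_0}$. Splits for which one of $P_0, P_1$ is empty yield Chv\'atal cuts, which are handled by $x^* \in P^{\rm CG}$. If the split is proper and $x^*$ lies outside the open strip, then $x^* \in P_0 \cup P_1$ since $R \subseteq P^{\rm CG} \subseteq P$. If $x^*$ lies strictly inside the strip, the membership $x^* \in (C_v)_I \subseteq C_v^{\pi,\pi_0}$ for each vertex $v$ provides a segment in $C_v$ with endpoints in $(C_v)_0$ and $(C_v)_1$ passing through $x^*$. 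Using $P = \bigcap_v C_v$ and hence $P_i = \bigcap_v (C_v)_i$, I plan to produce a single segment with endpoints in $P_0$ and $P_1$ passing through $x^*$ by a 2D convexity argument that rotates a line through $x^*$ between these candidate directions, exploiting non-emptiness of $P_0$ and $P_1$ and the fact that in $\R^2$ a single degree of freedom suffices to match simultaneous cone constraints.

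The principal obstacle is precisely this last gluing step. If the direct 2D argument falls short, the fallback is to augment $R$ with the bridge inequalities indexed by pairs of facets of $P^{\rm CG}$---polynomially many such pairs, each contributing at most two bridge directions per strip endpoint arrangement, with bit complexity controlled as in \cref{le:sizeb}---and then invoke \cref{thm:CG-closure} once more on the resulting polyhedron to absorb any remaining Chv\'atal strengthenings. Either route produces a description of $P^{\rm split}$ of size polynomial in $m$, $\log\|A\|_\infty$, and $\log\|b\|_\infty$, as required.
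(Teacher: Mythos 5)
Your proposal does not address the statement you were asked to prove. The statement is \cref{thm:CG-closure}, the Bockmayr--Eisenbrand result that in any \emph{fixed dimension} $d$ the \emph{Chv\'atal} closure of $P\subseteq\R^d$ has a description of size polynomial in $m$, $\log\|A\|_\infty$ and $\log\|b\|_\infty$. In the paper this is an external result, cited from \cite{bockmayrEisenbrand2001} and used as a black box; it is not proved there. What you wrote instead is a plan for proving \cref{thm:split-closure} (the polynomiality of the \emph{split} closure in $\R^2$), and your plan explicitly invokes \cref{thm:CG-closure} as an ingredient --- so relative to the assigned statement your argument is circular: you assume exactly what you were supposed to establish. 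Moreover, nothing in your outline touches the actual content of \cref{thm:CG-closure} (bounding the number and size of Chv\'atal cuts in dimension $d$, e.g.\ via lattice-width/flatness or a case analysis over lower-dimensional faces); everything you do is specific to $\R^2$ and to split disjunctions.

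Even judged as a sketch for \cref{thm:split-closure}, the plan has a fatal step: the claim that $C^{\rm split}=C_I$ for a translated cone $C\subseteq\R^2$ is false in general, and your justification conflates two different inequalities. The Chv\'atal strengthening of $\pi x\le \max_{x\in C}\pi x$ is $\pi x\le\lfloor \max_{x\in C}\pi x\rfloor$, which can be strictly weaker than the facet $\pi x\le \max_{x\in C_I}\pi x$ of $C_I$ with the same normal; so facets of $C_I$ need not be split (or Chv\'atal) cuts for $C$. Indeed the paper's \cref{rem:int-hull} shows only that the Chv\'atal closure of the split closure of $C$ is $C_I$ (split rank at most $2$), which is consistent with $C^{\rm split}\supsetneq C_I$. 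Consequently your outer bound $R$ need not contain $P^{\rm split}$'s true description logic, and your ``gluing'' step for points inside a strip is left as an acknowledged gap with only a vague fallback. For comparison, the paper's route for the split closure goes through corner relaxations (\cref{lem:corner}), a classification of effective split cuts (\cref{lem:two-types}), and a domination argument showing that per facet of $C_I$ only the two extreme cuts $H(J_0,\hat J)$ and $H(J_t,\hat J)$ are needed in addition to the Chv\'atal closure (\cref{lem:find-canonical}, \cref{lem:two-sides-dominate}, \cref{thm:IH-and-CG}) --- but none of that is what \cref{thm:CG-closure} asserts, and none of it is supplied by your proposal.
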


Because of \cref{thm:CG-closure}, in order to prove \cref{thm:split-closure} it is sufficient to show that the intersection of all the split cuts for $P$ that are not Chv\'atal cuts is a polyhedron that admits a description of polynomial size.\medskip

We now start the proof of \cref{thm:split-closure}.
We can assume that $P\subseteq\R^2$ is pointed, as otherwise it is immediate to see that the split closure of $P$ is $P_I$ and is defined by at most two inequalities.
The following result holds in any dimension.

\begin{lemma}[\cite{DBLP:journals/mp/AndersenCL05}; see also {\cite[Corollary 5.7]{conforti2014integer}}]\label{lem:corner}
The split closure of $P$ is the intersection of the split closures of all the corner relaxations of $P$ (i.e., relaxations obtained by selecting a feasible or infeasible basis of the system $Ax\le b$).
\end{lemma}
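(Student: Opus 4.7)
The plan is to prove the two inclusions $P^{\rm split}\subseteq \bigcap_B C_B^{\rm split}$ and $\bigcap_B C_B^{\rm split}\subseteq P^{\rm split}$, where the intersection runs over all bases $B$ of $Ax\le b$. The forward inclusion is immediate from monotonicity of the split operation: since each corner relaxation $C_B$ is obtained by dropping constraints, $P\subseteq C_B$, hence $P^{\pi,\pi_0}\subseteq C_B^{\pi,\pi_0}$ for every split disjunction $(\pi,\pi_0)$, and intersecting over all disjunctions yields $P^{\rm split}\subseteq C_B^{\rm split}$ for every basis $B$.

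For the reverse inclusion, my plan is to show $\bigcap_B C_B^{\pi,\pi_0}\subseteq P^{\pi,\pi_0}$ for every fixed split disjunction $(\pi,\pi_0)$: exchanging the two intersections then gives $\bigcap_B C_B^{\rm split}=\bigcap_{(\pi,\pi_0)}\bigcap_B C_B^{\pi,\pi_0}\subseteq \bigcap_{(\pi,\pi_0)}P^{\pi,\pi_0}=P^{\rm split}$. To establish the pointwise containment for fixed $(\pi,\pi_0)$, it suffices to show that every facet-defining inequality of $P^{\pi,\pi_0}=\conv(P_0\cup P_1)$ is already valid for $C_B^{\pi,\pi_0}$ for at least one basis $B$. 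Facets of $P^{\pi,\pi_0}$ split into two types: (a) \emph{inherited} facets $A_jx\le b_j$ from $P$ that remain facet-defining for $P_0$ or $P_1$, and (b) \emph{bridge} facets joining a vertex of $P_0$ on $\pi x=\pi_0$ to a vertex of $P_1$ on $\pi x=\pi_0+1$. For type (a), choose any basis $B$ containing row $j$; then $A_jx\le b_j$ is a defining inequality of $C_B$ and therefore valid for $C_B^{\pi,\pi_0}$. For type (b) in $\R^2$, identify the row $j_0$ of $A$ whose intersection with $\pi x=\pi_0$ produces the $P_0$-endpoint of the bridge and the row $j_1$ whose intersection with $\pi x=\pi_0+1$ produces the $P_1$-endpoint, and take $B=\{j_0,j_1\}$; then $C_B$ is a translated cone whose facet lines pass through the same two bridge endpoints, and a direct geometric argument shows the bridge inequality is valid for $C_B^{\pi,\pi_0}$.

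The main obstacle is the case analysis for bridge facets in the presence of degeneracies: for instance, when a bridge endpoint is itself a vertex of $P$ (defined by two rows, forcing a choice of which row to put into $B$); when one of $P_0,P_1$ is empty, so that $P^{\pi,\pi_0}$ degenerates and the only cuts are Chv\'atal; or when the split set contains an entire facet of $P$. A further subtlety, relevant to the cited general-dimension statement but not to the 2D application in this paper, is that a bridge in $\R^d$ is a $(d-1)$-face and identifying a single basis of $d$ rows of $A$ that ``captures'' it requires a more delicate argument, typically via disjunctive-programming multipliers. The 2D case used here is the cleanest because each bridge is an edge and is determined by at most two rows of $A$, so the basis $B=\{j_0,j_1\}$ above does the job.
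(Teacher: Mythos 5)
The paper does not prove \cref{lem:corner}: it is quoted as a known result with references to Andersen--Cornu\'ejols--Li and to the textbook of Conforti, Cornu\'ejols and Zambelli, so there is no ``paper proof'' to compare against. Your overall plan is exactly the standard one underlying the cited result: the easy inclusion by monotonicity, and for the hard inclusion reducing to a fixed disjunction $(\pi,\pi_0)$ and arguing facet-by-facet that each facet of $\conv(P_0\cup P_1)$ is already valid for $C_B^{\pi,\pi_0}$ for some basis $B$. The decomposition into inherited facets (handled by any basis containing the relevant row) and bridge facets (handled by $B=\{j_0,j_1\}$) is the right one, and in the plane the bridge basis you name is the correct choice.

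The genuine gap is the sentence ``a direct geometric argument shows the bridge inequality is valid for $C_B^{\pi,\pi_0}$'': this is the entire content of the hard case and it is asserted rather than shown. The subtlety is that $C_B^{\pi,\pi_0}$ can equal $C_B$ (so the disjunction yields no cut at all) when the apex $v_B$ of $C_B$ lies outside the split set, in which case the bridge inequality is \emph{not} valid for $C_B^{\pi,\pi_0}$. So before one can conclude, one must prove that $v_B$ (the intersection of the lines through $j_0$ and $j_1$) actually lies in the split set. This is true, but it needs an argument: because the bridge is facet-defining for $\conv(P_0\cup P_1)$, the directions along $j_0$ into $H_0$ from $p_0$ and along $j_1$ into $H_1$ from $p_1$ must both point to the valid side of the bridge line; a short calculation in a normalized coordinate system (split $\{0\le x_2\le 1\}$) then forces the second coordinate of $v_B$ into $[0,1]$. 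Once this is in hand, $p_0$ and $p_1$ are precisely where the rays of $C_B$ cross $\pi x=\pi_0$ and $\pi x=\pi_0+1$, so the bridge line \emph{is} the split cut of the cone $C_B$ from this disjunction, and validity for $C_B^{\pi,\pi_0}$ follows. The parallel case ($j_0$ and $j_1$ not a basis) then collapses to an inherited facet and poses no problem. You flag degeneracies and the empty case $P_0=P_1=\emptyset$, which is appropriate, but these are not resolved either. As written, the argument is a correct outline of the known proof with the central geometric lemma missing.
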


Since there are at most $m\choose 2$ corner relaxations of $P$ (i.e., bases of $Ax\le b$), because of \cref{lem:corner} in the following we will work with a corner relaxation of $P$, which we denote by $C$. Thus $C$ is a full-dimensional translated pointed cone. We denote its apex by $v$.\smallskip

\begin{definition}\label{def:effective}(Effective split sets and effective split cuts)
We say that a split set $S$ is {\em effective} for $C$ if $v$ lies in its interior; note that this happens if and only if there is a split cut for $C$ derived from $S$ that cuts off $v$. Such a split cut will also be called effective.
\end{definition}

Since $C$ is a translated cone, for every effective split disjunction $(\pi,\pi_0)$ we have $C^{\pi,\pi_0}=C\cap H$ for a unique split cut $H$ derived from this disjunction. In the following, whenever we say ``{\em the} split cut derived from a given disjunction'' we refer to this specific split cut. 
Note that when the boundary of an effective split set $S$ intersects the facets of $C$ in precisely two points, the split cut derived from $S$ is delimited by the line containing these two points, while when the boundary of $S$ intersects the facets of $C$ in a single point, the line delimiting the split cut derived from $S$ contains this point and is parallel to the lineality space of $S$. (In the latter case, the split cut is necessarily a Chv\'atal cut.)

In the following, we let $\intr(X)$ denote the interior of a set $X\subseteq\R^2$.

\begin{lemma}\label{lem:two-types}
Every effective split cut for $C$ is of one of the following types:
\begin{enumerate}
    \item a Chv\'atal cut;
    \item a cut derived from a split set $S$ such that $S \cap \intr(C_I) \neq \emptyset$; in this case, both lines delimiting $S$  intersect the same facet of $C_I$.
\end{enumerate}
\end{lemma}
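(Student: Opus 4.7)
The strategy is to isolate the non-Chv\'atal case and then analyse where the strip $S$ meets $C_I$. If $C\cap W_0=\emptyset$ or $C\cap W_1=\emptyset$, the split cut is a Chv\'atal cut and we are in type~1; so assume both $C\cap W_0$ and $C\cap W_1$ are non-empty. Let $r_1,r_2$ be the extreme rays of $\rec(C)$, so $F_i=v+\cone(r_i)$ for $i=1,2$. Writing a generic point of $C$ as $v+\alpha_1 r_1+\alpha_2 r_2$ with $\alpha_i\ge0$ and using $\pi v\in(\pi_0,\pi_0+1)$, $C\cap W_0\ne\emptyset$ forces $\pi r_i<0$ for some $i$, while $C\cap W_1\ne\emptyset$ forces $\pi r_j>0$ for some $j$. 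Hence $\pi r_1$ and $\pi r_2$ are both non-zero with opposite signs; assume WLOG $\pi r_1<0<\pi r_2$.

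To prove $S\cap\intr(C_I)\ne\emptyset$, I would invoke Meyer's theorem to get $\rec(C_I)=\rec(C)=\cone(r_1,r_2)$ together with the standard fact that $\intr(C_I)+\rec(C_I)\subseteq\intr(C_I)$. Pick any $q^*\in\intr(C_I)$ (non-empty, as $C_I$ is a full-dimensional polyhedron in $\R^2$): the points $q^*+tr_1$ and $q^*+tr_2$ stay in $\intr(C_I)$ for every $t\ge0$, and for $t$ sufficiently large the first lies in $W_0$ while the second lies in $W_1$. The segment joining these two points then lies in $\intr(C_I)$ by convexity, and its $\pi$-values vary continuously from below $\pi_0$ to above $\pi_0+1$, so it must meet $S$.

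For the second claim, note that $\bd(C_I)$ is an unbounded polyline $\Pi$ going from infinity on the Chv\'atal-strengthened facet $F_1'$ (parallel to $r_1$) to infinity on $F_2'$ (parallel to $r_2$), through finitely many bounded facets of $C_I$. The main technical point is that $f(x):=\pi x$ is strictly monotonic along $\Pi$, sweeping all of $\R$ from $-\infty$ at the $r_1$-infinity end to $+\infty$ at the $r_2$-infinity end. To prove monotonicity, I would observe that as one traverses $\Pi$ in that direction the tangent direction rotates monotonically (by convexity of $C_I$) through an arc starting at $-r_1$ and ending at $r_2$, of total angular length $\pi-\theta<\pi$, where $\theta$ is the angle between $r_1$ and $r_2$. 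Both endpoints of this arc lie in the open half-plane $\{d:\pi d>0\}$, since $\pi(-r_1)=-\pi r_1>0$ and $\pi r_2>0$; and an arc of angular length strictly less than $\pi$ with both endpoints in one open half-plane stays entirely inside it. Hence $\pi d>0$ for every tangent direction $d$ along $\Pi$, so $f$ is strictly increasing.

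With monotonicity in hand, $L_0$ and $L_1$ meet $\Pi$ at unique points $P_0,P_1$ with $f(P_0)=\pi_0$ and $f(P_1)=\pi_0+1$. Every vertex of $C_I$ lies in $\Z^2$, so $f$ at each vertex of $\Pi$ is an integer; since no integer lies strictly in $(\pi_0,\pi_0+1)$, no vertex of $\Pi$ has $f$-value in that open interval. The sub-arc of $\Pi$ with $f\in[\pi_0,\pi_0+1]$ therefore contains no vertex of $C_I$ in its relative interior and is consequently contained in a single facet $G$ of $C_I$; in particular $P_0,P_1\in G$, so both $L_0$ and $L_1$ meet the facet $G$. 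The main obstacle in the plan is making the monotonicity-of-$f$-along-$\Pi$ step precise, since it rests on a careful convexity-plus-rotation argument about the tangent directions; once that is settled, the integrality of the vertices of $C_I$ finishes the proof in essentially one line.
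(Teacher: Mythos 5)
Your proposal is correct, and the first half matches the paper's strategy: dispose of the Chv\'atal case (your split into ``$C\cap W_0$ empty or $C\cap W_1$ empty'' is equivalent to the paper's split on whether $\rec(C)$ contains a recession direction of $S$), deduce $\pi r_1<0<\pi r_2$, and use $\rec(C_I)=\rec(C)$ to push a segment through $\intr(C_I)$ whose $\pi$-values sweep across $[\pi_0,\pi_0+1]$. Where you diverge is in the ``same facet'' step. The paper settles it in one sentence from the observation that no vertex of $C_I$ lies in $\intr(S)$; the implicit argument is short and local: take $p\in\intr(S)\cap\intr(C_I)$ and move from $p$ in a direction $d$ with $\pi d=0$ (the side on which you leave $C_I$ --- at least one works since $C_I$ is pointed). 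The exit point $q$ has $\pi q=\pi p\in(\pi_0,\pi_0+1)\not\subseteq\Z$, so $q$ is not a vertex and lies in the relative interior of a single facet $F$; since the vertex endpoints of $F$ are integer, their $\pi$-values are outside $(\pi_0,\pi_0+1)$, and linearity of $\pi$ on the segment/ray $F$ forces $F$ to cross both $L_0$ and $L_1$. You instead prove the global statement that $\pi$ is strictly monotone along the entire boundary polyline of $C_I$, via the rotation-of-tangent-directions argument. That argument is sound (the care you flag is indeed needed, but it all goes through: the arc of edge directions from $-r_1$ to $r_2$ has angular length $\pi-\theta<\pi$, and both endpoints lie strictly in $\{d:\pi d>0\}$, so the whole arc does) and yields slightly more than the lemma asks for, namely uniqueness of the facet crossed by $S$, but it is heavier machinery than the local exit-point argument needs. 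One small notational slip: you write $W_0,W_1$ for the two half-planes of the disjunction, but in this paper $W_0,W_1$ are reserved for the parallelogram construction in Definition~\ref{def:tilt}; you mean $H_0,H_1$.
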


\begin{proof}  Consider any effective split cut given by a split set $S$. We look at two cases: the recession cone of $C$ contains a recession direction of $S$, or not. In the second case, one of the boundary of $S$ intersects with both facets of $C$, and the other one does not intersect with $C$ since $S$ is assumed to be effective. Then the split cut is a Chv\'atal cut. In the first case, if the recession direction of $S$ is on the boundary of the recession cone of $C$, the two boundaries of $S$ are parallel to a facet of $C$. In this case as well, the split cut is a Chv\'atal cut. Finally, suppose that the interior of the recession cone of $C$ contains a recession direction of $S$. Since $C$ and $C_I$ have the same recession cone, $S$ intersects the interior of $C_I$. As no vertex of $C_I$ can be in the interior of $S$, the bounding lines of $S$ must intersect the same facet of $C_I$.
\end{proof}

\begin{definition}\label{def:ell}
Let $F_I^1,\dots, F_I^n$ be the facets of $C_I$. For every $i\in\{1,\dots,n\}$ we denote by $\ell_I^i$ the line containing $F_I^i$. Furthermore, we define $\widehat{\ell^i_I}$ as the unique line with the following properties:
\begin{enumerate}
    \item $\widehat{\ell^i_I}$ is parallel to $\ell_I^i$;
		\item $\widehat{\ell^i_I}$ contains integer points;
    \item there is no integer point strictly between $\ell_I^i$ and $\widehat{\ell^i_I}$;
    \item $\widehat{\ell^i_I}\cap C_I=\emptyset$.
\end{enumerate}
\end{definition}

Given two split cuts $H,H'$ for $C$, we say that $H$ dominates $H'$ if $C\cap H\subseteq C\cap H'$.

\begin{lemma}\label{lem:apex}
Fix $i \in \{1, \ldots, n\}$ and define the split set $S:=\conv(\ell_I^i,\widehat{\ell^i_I})$. 
If $v\in\intr(S)$, then the split cut for $C$ derived from $S$ is a Chv\'atal cut that dominates every split cut derived from a split set intersecting $F^i_I$.
\end{lemma}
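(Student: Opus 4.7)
The plan is to prove the two assertions of the lemma separately, both exploiting the hypothesis $v\in\intr(S)$ together with the definition of $\widehat{\ell^i_I}$.

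For the first assertion, I would show that $C$ lies on one side of the split disjunction, forcing the split cut to collapse to a Chv\'atal cut. Let $(\pi,\pi_0)$ be the disjunction corresponding to $S$, so $\widehat{\ell^i_I}=\{\pi x=\pi_0\}$ and $\ell_I^i=\{\pi x=\pi_0+1\}$. Since $F_I^i\subseteq\ell_I^i$ is a facet of $C_I$ and $\widehat{\ell^i_I}\cap C_I=\emptyset$, $C_I$ must lie in $H_1:=\{\pi x\geq\pi_0+1\}$. Because $C$ and $C_I$ share the same recession cone (Meyer's theorem), no recession direction $d$ of $C$ can have $\pi d<0$, for otherwise $C_I$ would extend into $\{\pi x<\pi_0+1\}$. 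Combined with $\pi v>\pi_0$ (from $v\in\intr(S)$), this yields $\pi x>\pi_0$ for every $x\in C$, and hence $C\cap H_0=\emptyset$ where $H_0=\{\pi x\leq\pi_0\}$. The split cut from $S$ therefore equals $C\cap H_1$, a Chv\'atal cut whose defining half-plane I denote by $H:=\{\pi x\geq\pi_0+1\}$.

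For the domination, let $S'$ be a split set intersecting $F_I^i$---read, in light of Lemma~\ref{lem:two-types}, as both bounding lines $L_0',L_1'$ of $S'$ meeting $F_I^i$---and let $H'$ be the split cut for $C$ derived from $S'$. The aim is to show $C\cap H\subseteq H'$. The key decomposition I would use is: every $x\in C\cap H$ can be written as $y+r$ with $y\in C\cap\ell_I^i$ and $r\in\rec(C)$, because the segment $[v,x]$ satisfies $\pi v<\pi_0+1\leq\pi x$ and therefore crosses $\ell_I^i$ at some $y\in C$, while $r:=x-y$ is a nonnegative scalar multiple of $x-v\in\rec(C)$. It thus suffices to establish (a) $C\cap\ell_I^i\subseteq H'$ and (b) $\rec(C)\subseteq\rec(H')$, since then $x=y+r\in H'+\rec(H')\subseteq H'$.

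For (a), the intersection points $y_0:=L_0'\cap\ell_I^i$ and $y_1:=L_1'\cap\ell_I^i$ both lie in $F_I^i\subseteq C$, so $y_0\in C\cap H_0'$ and $y_1\in C\cap H_1'$; hence $[y_0,y_1]\subseteq C^{\pi',\pi_0'}\subseteq H'$, while the two remaining portions of $C\cap\ell_I^i$ sit in $(C\cap H_0')\cup(C\cap H_1')\subseteq H'$. For (b), the chain $C_I\subseteq C^{\pi',\pi_0'}\subseteq H'$ combined with $\rec(C)=\rec(C_I)$ forces every recession direction of $C$ into the closed half-plane $H'$. The main obstacle I anticipate is pinning down the precise interpretation of ``intersecting $F_I^i$''; the stronger version used above (both bounding lines of $S'$ cross $F_I^i$) is what Lemma~\ref{lem:two-types} warrants for non-Chv\'atal cuts, and the remaining degenerate configurations (where $L_0'$ or $L_1'$ is parallel to $\ell_I^i$) can be checked to collapse to either $S'=S$ or a non-effective $S'$.
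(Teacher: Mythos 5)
Your proof is correct. The first half matches the paper's reasoning: you make explicit (via $\rec(C)=\rec(C_I)$ and $\pi v>\pi_0$) what the paper states more tersely as ``the facets of $C$ do not intersect $\widehat{\ell^i_I}$''. The domination argument, though, takes a genuinely different route. The paper traces each bounding line $h_j$ of $S'$ from its intersection $x^j$ with $F_I^i$ (inside $C$) to its intersection $y^j$ with $\widehat{\ell^i_I}$ (outside $C$), deduces that $h_j$ meets $\bd C$ at a point of $\intr(S)$, and then concludes by convexity that the $S'$-cut, whose bounding line passes through those two crossing points, slices $C$ closer to the apex than $\ell_I^i$ does. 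You instead decompose each $x\in C\cap H$ as $y+r$ with $y\in C\cap\ell_I^i$ and $r\in\rec(C)$, and verify the two containments $C\cap\ell_I^i\subseteq H'$ and $\rec(C)\subseteq\rec(H')$ separately. Both arguments are sound; the paper's is shorter and more visual, while yours sidesteps the need to locate where $h_1,h_2$ cross $\bd C$ and works directly with the recession-cone structure, at the cost of some extra bookkeeping. One small refinement: to justify that both bounding lines of $S'$ meet $F_I^i$, you lean on Lemma~\ref{lem:two-types}, which only guarantees they meet \emph{some} facet of $C_I$; the paper obtains the needed fact directly from the observation that $\intr(S')$ contains no integer point (so no endpoint of $F_I^i$ can lie strictly inside $S'$), an argument which also absorbs the parallel degenerate cases you set aside at the end.
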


\begin{proof}
Since $v\in\intr(S)$, the facets of $C$ do not intersect $\widehat{\ell^i_I}$. This implies that the split cut for $C$ derived from $S$ is a Chv\'atal cut.

Let $S'$ be any split set intersecting $F^i_I$, and denote by $h_1,h_2$ the two lines delimiting $S'$. Since there is no integer point in $\intr(S')$, both $h_1$ and $h_2$ intersect $F^i_I$. We denote by $x^1$ (resp., $x^2$) the intersection point of $h_1$ (resp., $h_2$) and $F^i_I$. Also we define $y^1$ (resp., $y^2$) as the intersection point of $h_1$ (resp., $h_2$) and $\widehat{\ell^i_I}$. Since $x^1,x^2\in C$ and $y^1,y^2\notin C$ (as $\widehat{\ell^i_I}$ does not intersect $C$), $h_1$ and $h_2$ intersect the facets of $C$ in two points contained in $\intr(S)$. This implies that any split cut derived from $S'$ is dominated by the Chv\'atal cut derived from $S$.
\end{proof}

\begin{definition}\label{def:unit}(Unit interval)
Given a line $\ell$ containing integer points, we call each closed segment whose endpoints are two consecutive integer points of $\ell$ a {\em unit interval of $\ell$}.
\end{definition}

\begin{observation}\label{obs:intersect-interval}
Any split set can intersect at most one unit interval of a given line not parallel to the split.
\end{observation}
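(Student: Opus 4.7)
The plan is a length comparison in a suitable parametrization of $\ell$. Let $v\in\Z^2$ be the primitive integer direction vector of $\ell$ and fix an integer point $p_0\in\ell$; then the map $t\mapsto p_0+tv$ identifies $\ell$ with $\R$ so that integer points of $\ell$ correspond to $t\in\Z$ and the unit intervals of $\ell$ correspond to the segments $[k,k+1]$ for $k\in\Z$. If $(\pi,\pi_0)$ is the split defining $S$, then $d:=\pi v$ is a nonzero integer (precisely because $\ell$ is not parallel to the split), so $|d|\ge 1$.

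The condition $x=p_0+tv\in S$ becomes $\pi_0\le \pi p_0+td\le \pi_0+1$, so $S\cap\ell$ corresponds to a closed $t$-interval $[a,a+1/|d|]$ of length $1/|d|\le 1$. The crucial observation, on which the whole argument hinges, is that since $\pi_0$, $\pi p_0$, and $d$ are all integers, both endpoints $a$ and $a+1/|d|$ lie in $\tfrac{1}{|d|}\Z$.

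Now suppose $S$ intersected two distinct unit intervals of $\ell$ in sets of positive length. By convexity of $S\cap\ell$, the open $t$-interval $(a,a+1/|d|)$ would have to contain an integer $k$ that separates the two unit intervals, giving $k-a\in(0,1/|d|)$. But $k-a\in\tfrac{1}{|d|}\Z$, whose smallest positive element is $1/|d|$, a contradiction. Hence the relative interior of $S\cap\ell$ is contained in a single open unit interval $(k,k+1)$, proving the observation. I expect the only subtle point to be recording the denominator-$|d|$ rationality of the endpoints of $S\cap\ell$; this is exactly what forces the open $t$-interval to avoid all integers, and any purely ``length $\le 1$'' argument would be insufficient without it.
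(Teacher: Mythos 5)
Your argument is correct; the paper states this as an observation with no proof, treating it as immediate, so there is no competing argument to compare against. Your denominator-$\frac{1}{|d|}$ rationality step is a parametrized version of the standard one-line justification: an integer $k$ in the open $t$-interval would give an integer point $p_0+kv\in\ell\cap\Z^2$ with $\pi_0<\pi p_0+kd<\pi_0+1$, impossible since $\pi p_0+kd\in\Z$ (i.e., the interior of a split set contains no integer points), so the relative interior of $S\cap\ell$ misses all integer points of $\ell$ and lies in a single unit interval. You were also right to phrase the conclusion in terms of positive-length (interior) intersections: if $S\cap\ell$ is exactly a unit interval it touches the two neighboring unit intervals at their endpoints, and this interior formulation is exactly what is needed later (a unique unit interval of $\ell_I^i$ meeting both lines delimiting $S$ in \cref{lem:find-canonical}).
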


\begin{definition}
Fix $i \in \{1, \ldots, n\}$. Given a unit interval $J$ of $\ell_I^i$ and a unit interval $\hat{J}$ of $\widehat{\ell^i_I}$, there exists a unique parallelogram of area 1 having $J$ and $\hat{J}$ as two of its sides. We denote by $S(J,\hat{J})$ the split set delimited by the lines containing the other two sides of this parallelogram. If $S(J,\hat{J})$ is effective, we denote by $H(J,\hat{J})$ the split cut for $C$ derived from $S(J,\hat{J})$.
\end{definition}

\begin{lemma}\label{lem: effective-interval}
Fix $i \in \{1, \ldots, n\}$ such that $v\notin\intr(\conv(\ell_I^i,\widehat{\ell^i_I}))$. Then there exists a unique unit interval $\hat{J}$ of $\widehat{\ell_I^i}$ such that  $\hat{J} \cap C\neq \emptyset$. Furthermore, for each unit interval $J$ of $\ell_I^i$ contained in $F_I^i$, $S(J,\hat{J})$ is an effective split set. 
\end{lemma}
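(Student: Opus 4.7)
The plan is to prove the two parts of the lemma — existence and uniqueness of $\hat J$, and effectiveness of every $S(J,\hat J)$ — in that order.

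For the first part, my starting point is the observation that $\widehat{\ell_I^i}\cap C$ contains no integer points: any such point would lie in $C\cap\Z^2\subseteq C_I$, contradicting property~(4) of \cref{def:ell}. Since $\widehat{\ell_I^i}\cap C$ is the intersection of two convex sets, it is a segment, a point, or empty; being disjoint from $\Z^2$, a pigeonhole-style argument shows it must fit inside a single open unit interval of $\widehat{\ell_I^i}$ (a segment on $\widehat{\ell_I^i}$ that meets two unit intervals must contain their shared integer endpoint). This yields uniqueness of $\hat J$ conditional on nonemptiness. For nonemptiness I would use the hypothesis together with the fact that $v\notin\ell_I^i$ (otherwise the three points $v,w_1,w_2$ would be collinear and the two facets of $C$ would coincide): so $v$ lies on $\widehat{\ell_I^i}$ or strictly past it on the non-$C_I$ side. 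In either case, since $F_I^i\subseteq C$, any segment from $v\in C$ to a point of $F_I^i$ lies in $C$ and crosses $\widehat{\ell_I^i}$, so $\widehat{\ell_I^i}\cap C\ne\emptyset$ and the unique $\hat J$ exists.

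For the effectiveness claim, I would apply a unimodular change of coordinates so that $\ell_I^i=\{y=0\}$, $\widehat{\ell_I^i}=\{y=-1\}$, $C_I\subseteq\{y\ge 0\}$, and by the previous part $v_y\le -1$. Write $F_I^i=[(x_1,0),(x_2,0)]$ with $x_1,x_2\in\Z$, $J=[(s,0),(s+1,0)]$ (so $x_1\le s$ and $s+1\le x_2$), $\hat J=[(t,-1),(t+1,-1)]$, and let $(a,-1),(b,-1)$ be the points where the two facets of $C$ meet $\widehat{\ell_I^i}$ (collapsing to $v$ if $v_y=-1$); the previous part gives $t<a\le b<t+1$. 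Setting $\gamma:=-v_y\ge 1$ and $g(x,y):=x+(t-s)y-s$, the two slanted sides of $S(J,\hat J)$ are $\{g=0\}$ and $\{g=1\}$, so effectiveness reduces to $0<g(v)<1$. Using that $v$ is collinear with $(x_1,0),(a,-1)$ on one facet of $C$ and with $(x_2,0),(b,-1)$ on the other, one writes $v_x=x_1+\gamma(a-x_1)=x_2+\gamma(b-x_2)$ and a short computation yields the two expressions
\[
g(v)=(s-x_1)(\gamma-1)+\gamma(a-t)=\gamma(b-t)-(x_2-s)(\gamma-1).
\]
The first expression is a sum of nonnegative terms with $\gamma(a-t)>0$, giving $g(v)>0$. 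In the second expression, $b-t<1$ gives $\gamma(b-t)<\gamma$ while $x_2-s\ge 1$ gives $(x_2-s)(\gamma-1)\ge\gamma-1$; hence $g(v)<\gamma-(\gamma-1)=1$.

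The main technical obstacle I expect is organising the geometric setup cleanly. In particular, the degenerate subcase $v_y=-1$ (where $a=b=v_x$ and $\widehat{\ell_I^i}\cap C=\{v\}$) should be folded uniformly into the generic case $v_y<-1$, and the argument must be verified to be insensitive to the sign of $t-s$ (the orientation of the slanted sides of the parallelogram). Once $g$ is introduced and the collinearity identities for $v_x$ are written down, everything reduces to the two one-line bounds above.
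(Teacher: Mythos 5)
Your approach matches the paper's in structure — uniqueness of $\hat J$ via the absence of integer points in $\widehat{\ell^i_I}\cap C$, then a unimodular change of coordinates and a direct verification that $v$ lies strictly inside the split set (the paper's change of coordinates is a bit more economical, mapping $\conv(J,\hat J)$ to the unit square so that the split set becomes $\{0\le x_1\le 1\}$ and effectiveness reduces to $0<v_1<1$, but your version with the offset $t-s$ and the affine functional $g$ is equivalent). However, the collinearity step is wrong as written. You set $F_I^i=[(x_1,0),(x_2,0)]$ with $x_1,x_2\in\Z$, i.e., $(x_1,0),(x_2,0)$ are the vertices of $C_I$ bounding $F_I^i$, and then assert that $v$, $(x_1,0)$, $(a,-1)$ lie on one facet of $C$ (and symmetrically for $x_2,b$). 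In general they do not: the facets of $C$ are rays from $v$ and meet $\ell^i_I$ at points $(\alpha_1,0),(\alpha_2,0)$ satisfying $\alpha_1\le x_1$ and $\alpha_2\ge x_2$, but these $\alpha$'s are typically non-integer and strictly outside $[x_1,x_2]$ — the vertices $(x_1,0),(x_2,0)$ of $C_I$ lie in the interior of $C$ for a middle facet $F_I^i$. Your identities $v_x=x_1+\gamma(a-x_1)=x_2+\gamma(b-x_2)$ encode exactly this false collinearity.

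The error is repairable: replacing $x_1$ by $\alpha_1\le s$ and $x_2$ by $\alpha_2\ge s+1$ only makes $s-\alpha_1$ and $\alpha_2-s$ larger, so both bounds $g(v)>0$ and $g(v)<1$ survive — this is precisely what the paper does, working with the crossing points $(a_1,0),(a_2,0)$ of the facets of $C$ with $\ell^i_I$ rather than the integer endpoints of $F_I^i$. But as submitted, the step is incorrect and needs the substitution to be spelled out. A smaller issue: your justification of $v\notin\ell^i_I$ (``otherwise the two facets of $C$ would coincide'') is not a valid argument; what is true is that $v\in\ell^i_I$ forces $\ell^i_I$ to be a facet line of $C$ (else the sub-cone of $C$ on the non-$C_I$ side of $\ell^i_I$ would be a lattice-free pointed translated cone in $\R^2$, which cannot exist), and in that degenerate situation $\widehat{\ell^i_I}\cap C=\emptyset$. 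That case is glossed over by the paper as well, and is harmless for \cref{thm:IH-and-CG} since no effective type-2 split set can have both of its bounding lines meet such a facet, but it is worth being aware that your stated reason does not establish what you claim.
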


\begin{proof} The existence of $\hat J$ follows from the assumption $v\notin\intr(\conv(\ell_I^i,\widehat{\ell^i_I}))$.
Furthermore, $\hat J$ is unique because, by definition of $\widehat{\ell^i_I}$, there are no integer points in $C\cap \widehat{\ell^i_I}$. 

We now prove that for each unit interval $J$ of $\ell_I^i$ contained in $F_I^i$, $S(J,\hat{J})$ is an effective split cut. Up to a unimodular transformation, we can assume that $J=\{x\in\R^2:x_2=0,\,0\le x_1\le1\}$ and $\hat{J}=\{x\in\R^2:x_2=1,\,0\le x_1\le1\}$. Then the split set $S(J,\hat{J})$ is defined by the inequalities $0\le x_1\le1$.

Since the second coordinate of $v$ is $v_2\ge1$ and $C_I$ is contained in the half-plane defined by $x_2\le0$ (as this inequality induces facet $F_I^i$ of $C_I$), it follows that both facets of $C$ intersect the lines defined by $x_2=0$ and $x_2=1$.
Thus one facet of $C$ contains points $(a_1,0)$ and $(b_1,1)$, and the other facet contains points $(a_2,0)$ and $(b_2,1)$, where $a_1\leq 0$, $a_2\geq 1$ and $0<b_1<b_2<1$. It is now straightforward to verify that $v$, which is the intersection point of the two facets, satisfies $0<v_1<1$. This shows that $S(J,\hat{J})$ is an effective split set.
%
%
%
%
\end{proof}

\begin{figure}
\includegraphics[width=0.65\textwidth]{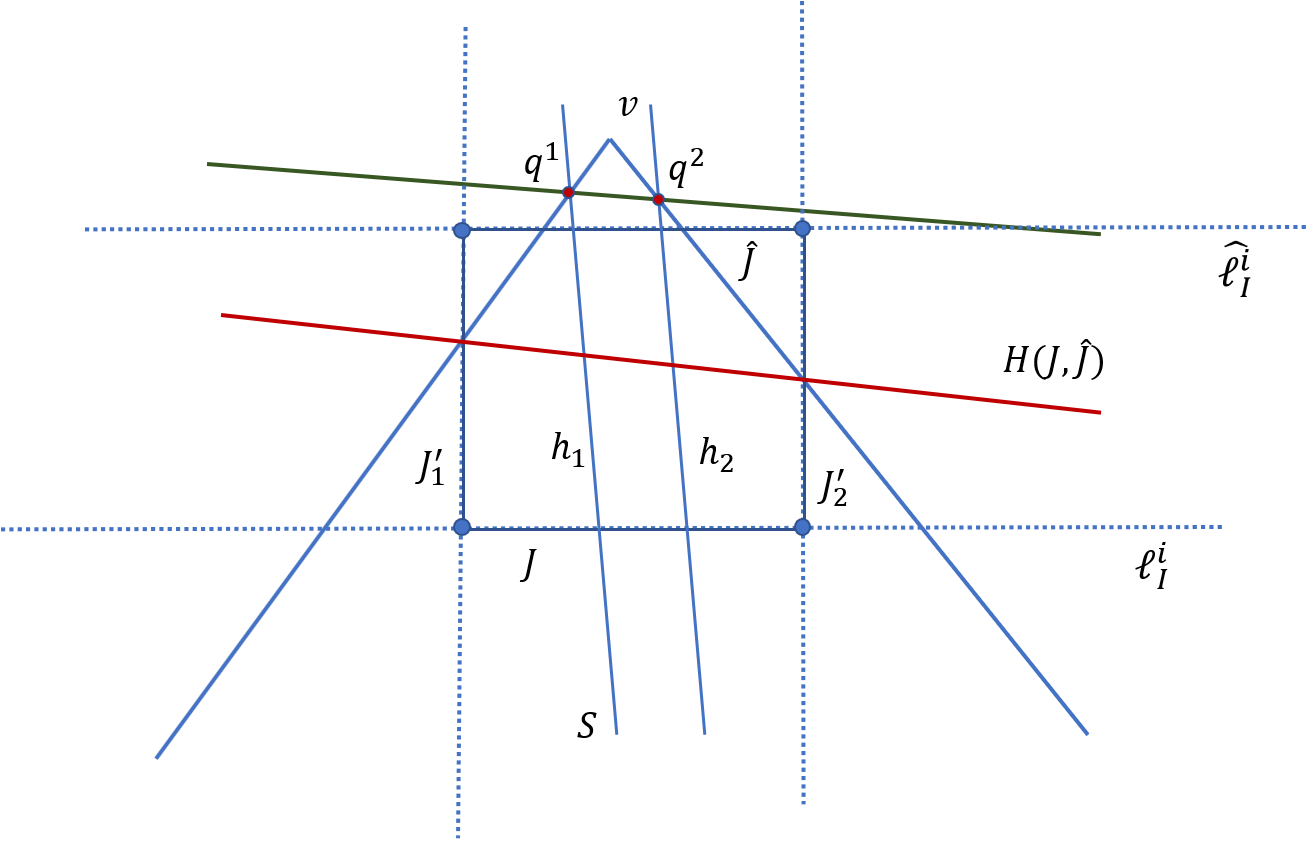}\centering
\caption{Illustration of the notation used in the proof of \cref{lem:find-canonical}. The red cutting plane $H(J,\hat J)$ dominates the dark green cutting plane derived from the split $S$.}
\label{fig:lem312}
\end{figure}

\begin{lemma}\label{lem:find-canonical}
Fix $i \in \{1, \ldots, n\}$. Let $S$ be a split set that gives an effective split cut of type 2 from \cref{lem:two-types}, where $S$ intersects  $F^i_I$. Suppose that this split cut is not dominated by a Chv\'atal cut. Let $J$ be the unit interval of $\ell_I^i$ that intersects both lines delimiting $S$ (see \cref{obs:intersect-interval}), and let $\hat{J}$ be the unit interval of $\widehat{\ell_I^i}$ such that $\hat{J} \cap C\neq \emptyset$ (see \cref{lem: effective-interval}). Then:
\begin{enumerate}[(i)]
\item  both lines delimiting $S$ intersect $\hat{J}$;
\item any cut produced by $S$ is dominated by $H(J,\hat{J})$.
\end{enumerate}
\end{lemma}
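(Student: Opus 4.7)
The plan for (i) is to produce a point in $S \cap C \cap \widehat{\ell^i_I}$; combined with \cref{obs:intersect-interval} and the uniqueness of $\hat{J}$ from \cref{lem: effective-interval}, this forces $S \cap \widehat{\ell^i_I} \subseteq \hat{J}$. Note that the contrapositive of \cref{lem:apex}, applied via the non-Chv\'atal-domination assumption, guarantees $v \notin \intr(\conv(\ell^i_I, \widehat{\ell^i_I}))$, so \cref{lem: effective-interval} indeed applies. After a unimodular change of coordinates, I would assume $\ell^i_I = \{x_2 = 0\}$, $C_I \subseteq \{x_2 \leq 0\}$, and $\widehat{\ell^i_I} = \{x_2 = 1\}$. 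Since $C$ and $C_I$ share the same recession cone (Meyer's theorem), which lies in $\{x_2 \leq 0\}$, we have $C \subseteq \{x_2 \leq v_2\}$; combined with $\hat{J} \cap C \neq \emptyset$ this forces $v_2 \geq 1$. Because $S$ is effective and of type 2, and $\partial S$ is one-dimensional, a standard dimensional argument produces $p \in \intr(S) \cap \intr(C_I)$ with $p_2 < 0$. The segment $[v,p] \subseteq \intr(S) \cap C$ then crosses $\widehat{\ell^i_I}$ at a point $q \in \intr(S) \cap C \cap \widehat{\ell^i_I}$, as needed.

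For (ii), I would refine the coordinates (using the unimodular basis from the parallelogram $S(J,\hat{J})$) so that in addition $J = [0,1] \times \{0\}$ and $\hat{J} = [0,1] \times \{1\}$; then $S(J,\hat{J}) = \{0 \leq x_1 \leq 1\}$, and \cref{lem: effective-interval} gives $0 < v_1 < 1$. Let the ``left'' and ``right'' facets of $C$ from $v$ meet $\widehat{\ell^i_I}$ at $(a_1,1)$ and $(b_1,1)$ respectively, with $0 < a_1 < b_1 < 1$ (strict, since the proof of \cref{lem: effective-interval} shows $\widehat{\ell^i_I} \cap C$ contains no integer points). Each cut removes a triangular region from $C$ adjacent to $v$: $\triangle v x^0 x^1$ for $H_S$ and $\triangle v y^0 y^1$ for $H(J,\hat{J})$, where $x^0, x^1$ are the intersections of the left and right facets of $C$ with the two boundaries $h^0, h^1$ of $S$, and $y^0, y^1$ are the analogous intersections with $\{x_1 = 0\}$ and $\{x_1 = 1\}$. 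Since each facet is monotone in $x_1$ along its ray from $v$, domination of $H_S$ by $H(J,\hat{J})$ reduces to the inequalities $x^0_1 \geq 0$ and $x^1_1 \leq 1$.

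To establish $x^0_1 \geq 0$: by (i) the line $h^0$ passes through $(s_1,0) \in J$ and $(t_1,1) \in \hat{J}$ with $s_1, t_1 \in [0,1]$. Since the left facet lies left of $h^0$ at $x_2 = 0$ but $v$ lies right of $h^0$ (because $v \in \intr(S)$), the intersection $x^0$ satisfies $x^0_2 \in (0, v_2)$. If $x^0_2 \in (0,1]$, then $x^0_1$ is a convex combination of $s_1, t_1 \in [0,1]$, hence in $[0,1]$; if $x^0_2 \in (1, v_2)$, then $x^0$ lies on the segment of the left facet between $v$ and $(a_1,1)$, so $x^0_1$ is a convex combination of $v_1$ and $a_1$, both strictly positive. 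The argument for $x^1_1 \leq 1$ is symmetric. The main subtlety is the case $x^0_2 > 1$, where the strict inequality $a_1 > 0$ afforded by the non-integrality of $\widehat{\ell^i_I} \cap C$ is essential; without this one could not rule out $x^0_1 < 0$ and the domination could fail.
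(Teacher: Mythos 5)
Your part (i) is correct and takes a genuinely different route from the paper. The paper argues by cases on which side of the unit square the two lines delimiting $S$ can exit (using that no integer point lies strictly between them and deriving a contradiction with $0<v_1<1$, $v_2\ge1$), whereas you produce a point of $\intr(S)\cap C$ on $\widehat{\ell^i_I}$ by joining $v\in\intr(S)$ (with $v_2\ge1$) to a point of $\intr(S)\cap\intr(C_I)$ supplied by the type-2 hypothesis, and then conclude via \cref{obs:intersect-interval} and the uniqueness of $\hat J$ in \cref{lem: effective-interval}. That argument is sound and arguably cleaner; your use of the contrapositive of \cref{lem:apex} to justify applying \cref{lem: effective-interval} matches the paper.

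Part (ii), however, has a genuine gap at the step ``$v$ lies right of $h^0$ (because $v\in\intr(S)$).'' The hypothesis $v\in\intr(S)$ only says that $v$ lies strictly between the two parallel lines delimiting $S$; it does not say that the line which meets the \emph{left} facet is the one having $v$ on its right. If instead the line meeting the left facet were the one with $v$ on its left, the intersection point $x^0$ would occur at height $x^0_2<0$, where the left facet has $x_1<0$, and your inequality $x^0_1\ge0$ --- and hence the claimed domination --- would fail. (This configuration is in fact impossible, but that requires an argument you do not give.) Relatedly, you never establish that each line delimiting $S$ meets the boundary of $C$ in exactly one point, and that the two points lie on different facets; this is needed even to assert that the region removed by the cut derived from $S$ is the triangle with vertices $v$, $x^0$, $x^1$. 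The paper proves precisely these facts from effectiveness (each facet of $C$, being a ray emanating from $v\in\intr(S)$, meets the boundary of $S$ at most once, while each line delimiting $S$ contains a point of $F^i_I\subseteq C$ and cannot be contained in the pointed translated cone $C$, hence meets its boundary at least once), and it then sidesteps your orientation question entirely: rather than matching lines to facets, it bounds the first coordinate of each crossing point $q$ directly, observing that $q_2>0$ because the line meets $J$, that if $0<q_2<1$ then $q$ lies on a segment joining a point of $J$ to a point of $\hat J$ and so $0<q_1<1$, and that if $q_2\ge1$ then $q\in\{x\in C:x_2\ge1\}\subseteq\{x:0<x_1<1\}$. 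Either adopting that route or explicitly ruling out the ``crossed'' matching of delimiting lines to facets is needed to complete your proof of (ii); the remaining steps (monotonicity of $x_1$ along each facet and comparison with $y^0,y^1$) are fine and coincide with the paper's final argument.
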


\begin{proof}
Up to a unimodular transformation, we can assume that $J=\{x\in\R^2:x_2=0,\,0\le x_1\le1\}$ and $\hat{J}=\{x\in\R^2:x_2=1,\,0\le x_1\le1\}$.
Since the split cut derived from $S$ is not dominated by a Chv\'atal cut, by \cref{lem:apex} the apex $v$ does not lie strictly between $\ell_I^i$ and $\widehat{\ell^i_I}$. In other words, $v_2\ge1$. Furthermore $0<v_1<1$, as shown in the proof of \cref{lem: effective-interval}.

Let $h_1$ and $h_2$ be the lines delimiting $S$, and define the segments $J'_1:=\{x\in\R^2: x_1=0,\, 0\leq x_2\leq 1\}$ and $J'_2:=\{x\in\R^2: x_1=1, 0\leq x_2\leq 1\}$. Since both $h_1$ and $h_2$ intersect $J$ and there is no integer point strictly between $h_1$ and $h_2$, we have that $h_1\cup h_2$ can contain points from the relative interior of at most one of $J'_1$, $J'_2$ and $\hat{J}$.

Assume that $h_1$ and $h_2$ intersect the relative interior of $J'_1$. Since $h_1$ and $h_2$ also intersect $J$, we have
\[h_1=\{x\in\R^2:x_2=ux_1+r_1\}, \qquad h_2=\{x\in\R^2:x_2=ux_1+r_2\},\]
for some $u<0$, and $r_1,r_2\in \R$.

Given any $\bar x\in h_1\cap \{x\in\R^2: x_2\ge1\}$, we have
\[\bar x_1=\frac{\bar x_2-r_1}u\le\frac{1-r_1}{u}<0.\]
Thus $h_1\cap \{x\in\R^2: x_2\ge1\}\subseteq \{x\in\R^2: x_1<0\}$. Similarly, $h_2\cap \{x\in\R^2: x_2\ge1\}\subseteq \{x\in\R^2: x_1<0\}$. As $0<v_1<1$ and $v_2\ge 1$, it follows that $v$ does not lie strictly between $h_1$ and $h_2$, a contradiction.

A similar argument shows that $h_1$ and $h_2$ do not intersect the relative interior of $J'_2$. It follows that $h_1$ and $h_2$ intersect $\hat{J}$, and (i) is proven.\smallskip

We now prove (ii).
Since, by part (i), each of $h_1$ and $h_2$ intersects both $J$ and $\hat J$, each of $h_1$ and $h_2$ intersects the boundary of $C$. Moreover, because $S$ is an effective split set, $h_1\cup h_2$ intersects the boundary of $C$ in at most two points. It follows that each of $h_1$ and $h_2$ intersects the boundary of $C$ in a single point, say $q^1$ and $q^2$, respectively. Note that $q^1_2>0$ and $q^2_2>0$, because $h_1$ and $h_2$ intersect $J$. Label $q^1$ and $q^2$ in such a way that $q^1$ (resp., $q^2$) belongs to the facet of $C$ contained in the half-plane $x_1\le v_1$ (resp., $x_1\ge v_1$). See Figure~\ref{fig:lem312}.

If $0<q^j_2<1$ for some $j\in\{1,2\}$, then $0<q^j_1<1$, because $h_1$ and $h_2$ intersect both $J$ and $\hat J$. If $q^j_2\geq 1$, then again $0<q^j_1<1$, as $\{x\in C:x_2\geq 1\}\subseteq \{x\in\R^2:0<x_1<1\}$.

The split set $S(J,\hat{J})$ is effective by \cref{lem: effective-interval}, and its boundary intersects the facets of $C$ in two points $r^1,r^2$ that satisfy $r^1_1=0$ and $r^2_1=1$. Then $r^1$ (resp., $r^2$) is further from the apex than $q^1$ (resp., $q^2$) is, as $q^1$, $q^2$ and $v$ all satisfy $0<x_1<1$. It follows that the cut $H(J,\hat{J})$ dominates any split cut derived from $S$. 
\end{proof}

\begin{figure}
\includegraphics[width=0.75\textwidth]{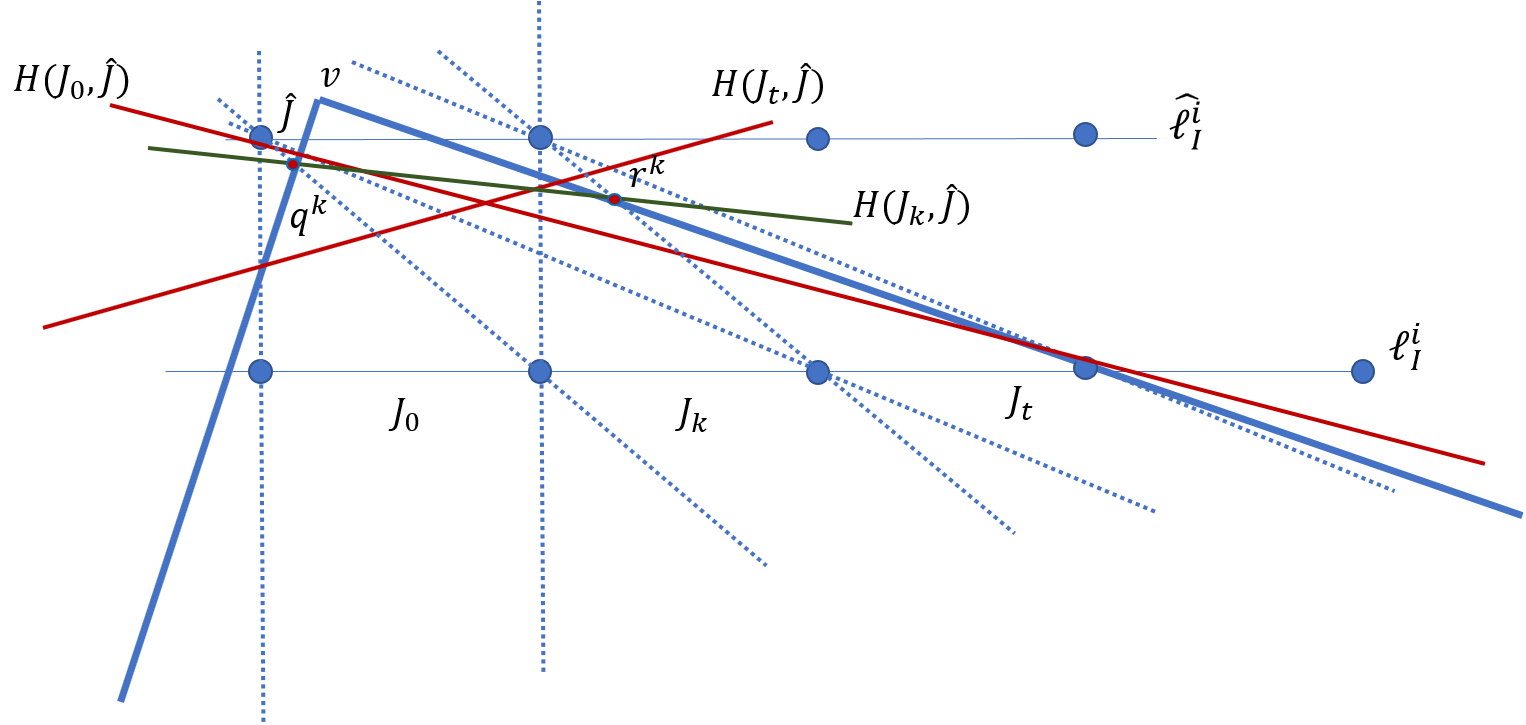}\centering
\caption{Illustration of the notation used in the proof of \cref{lem:two-sides-dominate}. The intersection of the two red cutting planes $H(J_0, \hat J)$ and $H(J_t, \hat J)$ from the leftmost and rightmost splits dominates  all other split cuts $H(J_k, \hat J_0)$, $k=0, \ldots, t$, illustrated here in dark green. }
\label{fig:lem313}
\end{figure}

\begin{lemma}\label{lem:two-sides-dominate}
Fix $i \in \{1, \ldots, n\}$ such that $v\notin\intr(\conv(\ell_I^i,\widehat{\ell^i_I}))$, and let $\hat J$ be as in \cref{lem: effective-interval}. 
Write $F^i_I=J_0\cup\dots\cup J_t$, where $J_0,\dots,J_t$ are the unit intervals contained in $F^i_I$ ordered consecutively  (i.e, $J_{k-1}\cap J_k$ only contains a single point for every $k\in\{1,\dots,t\}$). Then every point that violates $H(J_k,\hat{J})$ for some $k\in\{0,\dots,t\}$ also violates $H(J_0,\hat{J})$ or $H(J_{t},\hat{J})$.
\end{lemma}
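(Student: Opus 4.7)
The plan is to work in the coordinate system of \cref{lem: effective-interval}: via a unimodular transformation, $\ell_I^i = \{x_2 = 0\}$, $\widehat{\ell_I^i} = \{x_2 = 1\}$, and $\hat J = [(0,1),(1,1)]$. As shown in the proof of \cref{lem: effective-interval}, one then has $0 < v_1 < 1$ and $v_2 \geq 1$; moreover neither $(0,1)$ nor $(1,1)$ lies in $C$, since any integer point of $\widehat{\ell_I^i}$ lies outside $C_I \subseteq C$. Let $(\xi, 0)$ and $(\eta, 0)$ denote where the two facets of $C$ meet $\ell_I^i$, and write $J_k = [(a_k, 0), (a_k+1, 0)]$ for consecutive integers $a_0 < \cdots < a_t$. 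I first identify the boundary $L_k$ of $H(J_k, \hat J)$ as the line through $p_k := \{x_1 + a_k x_2 = a_k\} \cap (\text{left facet})$ and $q_k := \{x_1 + a_k x_2 = a_k+1\} \cap (\text{right facet})$. This identification is justified because the line $\{x_1 + a_k x_2 = a_k\}$ passes through $(a_k, 0) \in C$ and $(0,1) \notin C$ (lying strictly to the left of $C$), so the chord exits via the left facet; symmetric reasoning applies to the other line.

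Parametrize $p_k = (1-u_k)v + u_k(\xi, 0)$ and $q_k = (1-w_k)v + w_k(\eta, 0)$ with $u_k, w_k \in (0,1)$. Direct computation yields the M\"obius-in-$a_k$ formulas $u_k = (v_1 + a_k(v_2-1))/(v_1 - \xi + a_k v_2)$ and an analogous formula for $w_k$. Differentiating in $a_k$ and invoking $v_1 + \xi(v_2-1) > 0$ and $v_2 - v_1 - \eta(v_2-1) > 0$ (these are the statements that the left, resp.\ right, facet of $C$ at height $x_2=1$ has $x_1$-coordinate in $(0,1)$, which in the proof of \cref{lem: effective-interval} appear as $0 < b_1 < b_2 < 1$), I conclude that $u_k$ is strictly decreasing and $w_k$ strictly increasing in $a_k$: geometrically, as $k$ grows, $p_k$ slides along the left facet toward $v$ while $q_k$ slides along the right facet away from $v$.

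By Farkas, $H(J_0, \hat J) \cap H(J_t, \hat J) \subseteq H(J_k, \hat J)$ is equivalent to the existence of $\lambda_0, \lambda_t \geq 0$ with $n_k = \lambda_0 n_0 + \lambda_t n_t$ and $c_k \geq \lambda_0 c_0 + \lambda_t c_t$, where $n_j \cdot x \leq c_j$ describes $H(J_j, \hat J)$ with $n_j$ pointing away from $v$. The normal condition is immediate from the monotone rotation of $L_k$ established in the previous paragraph: $n_k$ lies in the positive cone spanned by $n_0$ and $n_t$. The right-hand-side inequality is equivalent to the single geometric statement that the point $v^* := L_0 \cap L_t$ lies in $H(J_k, \hat J)$, since $v^*$ saturates $n_0 \cdot x = c_0$ and $n_t \cdot x = c_t$.

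The main technical obstacle is verifying $v^* \in H(J_k, \hat J)$. I plan to do this by direct computation: solve for $v^*$ via Cramer's rule on the $2\times 2$ system $n_0 \cdot x = c_0$, $n_t \cdot x = c_t$, substitute into $n_k \cdot v^* - c_k$ using the explicit formulas for $u_k, w_k$, clear denominators, and observe that the resulting polynomial in $a_k$ vanishes at $a_k = a_0$ and $a_k = a_t$ (where $L_k$ coincides with $L_0$ or $L_t$ and hence passes through $v^*$), so it factors as $(a_k - a_0)(a_k - a_t)$ times a rational expression whose sign on $[a_0, a_t]$ gives the desired inequality. A cleaner geometric viewpoint is that $p(a)$ and $q(a)$ depend on $a$ via a projectivity, so the family $\{L_k\}$ envelopes a conic; since the cut lines are tangent from one side of this convex arc, any intermediate tangent separates the arc from the intersection of two other tangents, which is precisely the assertion $v^* \in H(J_k,\hat J)$.
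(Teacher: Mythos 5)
Your setup (coordinates, the two boundary intersection points on the facets, and their monotone motion in $k$ --- these match the paper's points $q^k, r^k$) is correct and matches the paper's. The Farkas/LP-duality reduction is a genuinely different way of organizing the argument: the paper shows that for \emph{every} $\bar x\in C$ the quantity $c^k(\bar x-v)$ is a concave function of $k$, so its minimum over $k\in\{0,\dots,t\}$ is attained at an endpoint; you instead reduce the whole family of inequalities to a single point $v^*=L_0\cap L_t$ and a cone-membership condition on the normals. That reduction is sound (modulo a sign convention issue: with $n_j$ pointing away from $v$ the half-plane is $\{n_j\cdot x\ge c_j\}$, and Farkas then gives $c_k\le\lambda_0 c_0+\lambda_t c_t$, not $\ge$), and it is a legitimate alternative decomposition.

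The gap is that the decisive step is not proven. You identify ``verifying $v^*\in H(J_k,\hat J)$'' as ``the main technical obstacle'' and then offer a plan (clear denominators, factor out $(a_k-a_0)(a_k-a_t)$, ``observe'' the remaining factor has the right sign) and an intuition (the lines envelope a conic and intermediate tangents separate). Neither is carried out: the sign of the residual factor is exactly the content of the lemma, and the conic-envelope picture equally requires knowing that the envelope is a convex arc on the correct side of the cuts --- an assertion, not an argument. (Also, ``monotone motion of the endpoints implies $n_k\in\operatorname{cone}(n_0,n_t)$'' deserves a short argument that the direction $q_k-p_k$ rotates monotonically; it is true but not literally immediate.) Compare with the paper's resolution of the same difficulty: after writing $c^k(\bar x-v)=1+\frac{b_1(a_1-b_1)}{(v_1-b_1)k+v_1(b_1-a_1)}$ for $\bar x\in G_1$, concavity on $[0,t]$ follows from the one-line observation that the numerator $b_1(a_1-b_1)$ is negative and the denominator is an increasing affine function that is positive there --- the sign check you defer is made transparent by this M\"obius form. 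If you want to keep your Farkas framing, the cleanest way to close your gap is to note $v^*\in C$ (the chords $[p_0,q_0]$ and $[p_t,q_t]$ cross inside the cone) and then invoke the paper's concavity at the single point $\bar x=v^*$; but then your route saves nothing over the paper's, since the concavity computation is the real work in both.
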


See Figure~\ref{fig:lem313} for an illustration of Lemma~\ref{lem:two-sides-dominate} and its proof.

\begin{proof}
Up to a unimodular transformation, we can assume that $\hat{J}=\{x\in\R^2:x_2=1,\,0\le x_1\le1\}$ and $J_k=\{x\in\R^2:x_2=0,\,k\le x_1\le k+1\}$ for every $k\in\{0,\dots,t\}$.
As argued in the proof of \cref{lem: effective-interval}, one facet $G_1$ of $C$ contains points $(a_1,0)$ and $(b_1,1)$, and the other facet $G_2$ contains points $(a_2,0)$ and $(b_2,1)$, where $a_1\leq 0$, $a_2\geq 1$ (in fact, $a_2 - 1 \geq t \geq 0$) and $0<b_1<b_2<1$.
Then the lines containing $G_1$ and $G_2$ are defined by the equations $x_1+(a_1-b_1)x_2=a_1$ and $x_1+(a_2-b_2)x_2=a_2$, respectively.

Given any $k\in\{0,\dots,t\}$, the lines delimiting the split set $S(J_k,\hat{J})$ are defined by the equations $x_1+kx_2=k$ and $x_1+kx_2=k+1$. The intersection points of the former line with $G_1$ and of the latter line with $G_2$ are respectively the following:
\[q^k=\left(\frac{b_1 k}{b_1 - a_1 + k},\frac{k-a_1}{b_1 - a_1 + k}\right), \qquad
r^k=\left(\frac{b_2 k+b_2-a_2}{b_2 - a_2 + k},\frac{k+1-a_2}{b_2 - a_2 + k}\right).\]

Consider any $k\in\{0,\dots,t\}$. Since $H(J_k,\hat{J})$ is a half-plane that does not contain $v$, it is defined by an inequality of the form $c^k(x-v)\ge1$, where $c^k\in\R^2$. Note that and $c^k(q^k-v)=c^k(r^k-v)=1$, as $q^k$ and $r^k$ belong to the line delimiting $H(J_k,\hat{J})$.

Let $\bar x$ be any point in $G_1$ and $k\in\{0,\dots,t\}$. Then $\bar x=v+\mu(q^k-v)$ for some $\mu\ge0$ and therefore
\[c^k(\bar x-v)=\mu c^k(q^k-v)=\mu=\frac{v_1-\bar x_1}{v_1-q^k_1}.\]

We claim that, for fixed $\bar x\in G_1$, the above right hand side is a concave function of $k$ when $k$ is considered as a continuous parameter in $[0,t]$. To simplify the argument, we will show that $\frac{v_1-b_1}{v_1-q^k_1}$ is a concave function of $k$: this is sufficient to establish the claim, as $v_1-b_1>0$ and $v_1-\bar x_1>0$.

We can calculate
\[v_1-q^k_1=v_1-\frac{b_1 k}{b_1 - a_1 + k} = \frac{(v_1-b_1)k+v_1b_1-v_1a_1}{b_1 - a_1 + k},\]
from which we obtain
\begin{equation*}
\begin{split}
\frac{v_1-b_1}{v_1-q^k_1}&=\frac{(v_1-b_1)k+(v_1-b_1)(b_1 - a_1)}{(v_1-b_1)k+v_1b_1-v_1a_1}\\
&=1+\frac{(v_1-b_1)(b_1 - a_1)-(v_1b_1-v_1a_1)}{(v_1-b_1)k+v_1b_1-v_1a_1}\\
&=1+\frac{b_1(a_1-b_1)}{(v_1-b_1)k+v_1b_1-v_1a_1}.
\end{split}
\end{equation*}
Since $b_1>0$, $a_1-b_1<0$ and $v_1-b_1>0$, the last fraction above is of the form $\frac{\alpha}{\beta k+\gamma}$, where $\alpha<0$, $\beta>0$ and $\gamma\in\R$. It is immediate to verify that such a function of $k$ is concave for $k>-\frac\gamma\beta$. In our context, this condition reads $k>-\frac{v_1(b_1-a_1)}{v_1-b_1}$, which is a negative number. Thus the function $k\mapsto c^k(\bar x-v)$ is concave over the domain $[0,t]$.
A similar argument shows that, for any fixed $\bar x\in G_2$, the function $k\mapsto c^k(\bar x-v)$ is concave over $[0,t]$  (using the fact that $t \leq a_2 - 1$).

If $\bar x$ is any point in $C$, then we can write $\bar x=\lambda x^1+(1-\lambda)x^2$, where $x^1\in G_1$, $x^2\in G_2$ and $0\le\lambda\le1$. Then $c^k(\bar x-v)=\lambda c^k(x^1-v)+(1-\lambda)c^k(x^2-v)$ is a concave function of $k$. This implies that the minimum of the set $\{c^k(\bar x-v):k\in\{0,\dots,t\}\}$ is achieved for $k=0$ or $k=t$. In particular, if $\bar x$ violates $H(J_k,\hat{J})$ for some $k\in\{0,\dots,t\}$, then $\min\{c^0(\bar x-v),c^t(\bar x-v)\}\le c^k(\bar x-v)<1$, and thus $\bar x$ also violates $H(J_0,\hat{J})$ or $H(J_t,\hat{J})$.
\end{proof}

\begin{theorem}\label{thm:IH-and-CG}
The number of facets of the split closure of a translated cone $C\subseteq\R^2$ is at most twice the number of facets of $C_I$ plus the number of facets of the Chv\'atal closure of $C$.
\end{theorem}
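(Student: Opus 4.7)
The plan is to combine the structural and dominance results from the preceding lemmas. First, I would observe that a split cut that does not cut off the apex $v$ is already redundant for $C$: if the defining split set $S$ does not contain $v$ in its interior, then for every $x \in C$ the ray from $v$ through $x$ either stays in the same half-plane of the disjunction as $v$, or crosses into the other half-plane at some $x' \in C$, in which case $x$ is a convex combination of $v$ and $x'$. Hence $C^{\pi,\pi_0} = C$, and only \emph{effective} split cuts contribute to the description of $C^{\text{split}}$. By \cref{lem:two-types}, every effective split cut is either a Chv\'atal cut or a type 2 cut, each type 2 cut being associated with a unique facet $F^i_I$ of $C_I$ (the one its defining split set intersects).

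The Chv\'atal cuts are all implied by the Chv\'atal closure $C^{\text{CG}}$ of $C$, contributing at most $|\text{facets}(C^{\text{CG}})|$ inequalities. To bound the contribution of the type 2 cuts, I would fix a facet $F^i_I$ of $C_I$ and show that the type 2 cuts associated with $F^i_I$ add at most two new facets. If $v \in \intr(\conv(\ell^i_I,\widehat{\ell^i_I}))$, then \cref{lem:apex} says every such cut is dominated by a single Chv\'atal cut, so no new facet is needed. Otherwise \cref{lem: effective-interval} gives the unique unit interval $\hat J$ of $\widehat{\ell^i_I}$ intersecting $C$; \cref{lem:find-canonical} then says that every non-Chv\'atal type 2 cut associated with $F^i_I$ is dominated by some $H(J_k,\hat J)$, where $J_0,\dots,J_t$ are the unit intervals composing $F^i_I$; and finally \cref{lem:two-sides-dominate} says that the two extreme cuts $H(J_0,\hat J)$ and $H(J_t,\hat J)$ together dominate all the $H(J_k,\hat J)$. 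So at most two new inequalities are needed per facet of $C_I$.

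Summing over the $n$ facets of $C_I$ and adding the facets of $C^{\text{CG}}$ gives the claimed bound. The substantive geometric work has already been carried out in \cref{lem:apex}, \cref{lem:find-canonical} and \cref{lem:two-sides-dominate}, so there is no real obstacle in the theorem itself; the main thing to be careful about is the preliminary observation that non-effective splits yield $C^{\pi,\pi_0}=C$, which is what allows the classification of \cref{lem:two-types} to cover every split cut that could be a facet of $C^{\text{split}}$.
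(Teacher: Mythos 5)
Your proof is correct and follows essentially the same route as the paper: classify effective split cuts via \cref{lem:two-types}, absorb the Chv\'atal cuts into the Chv\'atal closure, and dominate the type 2 cuts attached to each facet $F^i_I$ by the two extremal canonical cuts $H(J_0,\hat J)$ and $H(J_t,\hat J)$ using \cref{lem:apex}, \cref{lem: effective-interval}, \cref{lem:find-canonical} and \cref{lem:two-sides-dominate}. Your explicit preliminary observation that a split set not containing $v$ in its interior yields $C^{\pi,\pi_0}=C$ is a nice touch that the paper leaves implicit in its definition of effective split sets, but it does not change the argument.
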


\begin{proof} Let $E \subseteq \{1, \ldots, n\}$ be the index set of the facets of $C_I$ such that $v$ does not lie strictly between $\ell_I^i$ and $\widehat{\ell^i_I}$. By \cref{lem: effective-interval}, for every $i\in E$ there exists a unit interval $\hat J^i$ of $\widehat{\ell^i_I}$ such that $\hat J^i \cap C \neq \emptyset$. Moreover, let $J^i_0, \ldots J^i_{t_i}$ be the unit intervals of $\ell^i_I$ contained in $F^i_I$, ordered consecutively. Let $Q$ denote the Chv\'atal closure of $C$. We show that the split closure of $C$ is given by
\begin{equation}\label{eq:split-closure}
Q \cap \bigcap_{i \in E} \left(H(J^i_0, \hat J^i) \cap H(J^i_{t_i}, \hat J^i)\right),
\end{equation}
which suffices to prove the theorem.

Consider any split cut $H$ derived from a split set $S$. Since \cref{eq:split-closure} is contained in $Q$, we may assume that $H$ is not dominated by a Chv\'atal cut. Therefore it must be of type 2 in \cref{lem:two-types}, and thus there is a facet $F^i_I$ of $C_I$ that intersects the two lines delimiting $S$. By \cref{lem:apex}, $i \in E$. By \cref{lem:find-canonical} part (ii), $H$ is dominated by $H(J^i_k, \hat J^i)$ for some $k\in\{0,\dots,t_i\}$. By \cref{lem:two-sides-dominate}, $H(J^i_k, \hat J^i)$ is in turn dominated by $H(J^i_0, \hat J^i) \cap H(J^i_{t_i}, \hat J^i)$.
\end{proof}

To conclude the proof of \cref{thm:split-closure}, we note that by \cref{lem:corner}, \cref{thm:IH-and-CG} and \cref{thm:2D-complexity}, the number of inequalities needed to define the split closure of $P$ is polynomial in $m$, $\log\|A\|_\infty$ and $\log\|b\|_\infty$. Furthermore, the above arguments show that the size of every inequality is polynomially bounded. (However, it is known that also in variable dimension every facet of the split closure of a polyhedron $P$ is polynomially bounded; see, e.g., \cite[Theorem 5.5]{conforti2014integer}.)

\begin{remark}\label{rem:int-hull}
Given a translated cone $C\subseteq\R^2$, the arguments used in this section show that, for every facet $F^i_I$ of $C_I$, the split closure $C'$ of $C$ is strictly contained in the interior of the half-plane delimited by $\widehat\ell^i_I$ and containing $C$ (where we adopt the notation introduced in \cref{def:ell}). This implies that the Chv\'atal closure of $C'$ is $C_I$. In particular, the split rank of $C$ is at most 2.

Now let $P$ be a polyhedron in $\R^2$. It is folklore that the integer hull of $P$ is the intersection of the integer hulls of all the corner relaxations of $P$. (This is not true in higher dimensions.) Then, by the previous argument, the split rank of $P$ is at most 2.
\end{remark}

\section*{Acknowledgments}
We are very grateful for insightful comments on content and presentation from two anonymous reviewers. These helped a lot to improve the paper.

\bibliographystyle{plain}
\bibliography{full-bib}
\end{document}